\pdfoutput=1
\RequirePackage{ifpdf}
\ifpdf 
\documentclass[pdftex]{sigma}
\else
\documentclass{sigma}
\fi

\numberwithin{equation}{section}

\newtheorem{Theorem}{Theorem}[section]
\newtheorem{Lemma}[Theorem]{Lemma}
\newtheorem{Proposition}[Theorem]{Proposition}
 { \theoremstyle{definition}
\newtheorem{Definition}[Theorem]{Definition}
\newtheorem{Example}[Theorem]{Example}
\newtheorem{Remark}[Theorem]{Remark} }

\newcommand{\smalltwobytwo}[4]{
\left( \begin{smallmatrix}
 #1 & #2\\
 #3 & #4
\end{smallmatrix}\right)}

\newcommand{\twobytwo}[4]{
\left( \begin{matrix}
 #1 & #2\\
 #3 & #4
\end{matrix} \right)}

\begin{document}

\allowdisplaybreaks

\newcommand{\arXivNumber}{1712.02437}

\renewcommand{\thefootnote}{}

\renewcommand{\PaperNumber}{071}

\FirstPageHeading

\ShortArticleName{The Chevalley--Weil Formula for Orbifold Curves}

\ArticleName{The Chevalley--Weil Formula for Orbifold Curves\footnote{This paper is a~contribution to the Special Issue on Modular Forms and String Theory in honor of Noriko Yui. The full collection is available at \href{http://www.emis.de/journals/SIGMA/modular-forms.html}{http://www.emis.de/journals/SIGMA/modular-forms.html}}}

\Author{Luca CANDELORI}

\AuthorNameForHeading{L.~Candelori}

\Address{Department of Mathematics, University of Hawaii at Manoa, Honolulu, HI, USA}
\Email{\href{mailto:candelori@math.hawaii.edu}{candelori@math.hawaii.edu}}
\URLaddress{\url{http://math.hawaii.edu/~candelori/}}

\ArticleDates{Received December 08, 2017, in final form July 02, 2018; Published online July 17, 2018}

\Abstract{In the 1930s Chevalley and Weil gave a formula for decomposing the canonical representation on the space of differential forms of the Galois group of a ramified Galois cover of Riemann surfaces. In this article we prove an analogous Chevalley--Weil formula for ramified Galois covers of orbifold curves. We then specialize the formula to the case when the base orbifold curve is the (reduced) modular orbifold. As an application of this latter formula we decompose the canonical representations of modular curves of full, prime level and of Fermat curves of arbitrary exponent.}

\Keywords{orbifold curves; automorphisms; modular curves; Fermat curves}

\Classification{14H30; 14H37; 14H45}

\renewcommand{\thefootnote}{\arabic{footnote}}
\setcounter{footnote}{0}

\section{Introduction}

Let $f\colon X\rightarrow Y$ be a ramified Galois cover of compact Riemann surfaces of genus $g_X$ and $g_Y$, and let $G := \mathrm{Aut}_X(Y)$ be the Galois group of $f$. This is a finite group of automorphisms of $X$ that acts by pull-back on the space of holomorphic differential 1-forms of $X$, giving a $g_X$-dimensional complex representation
\begin{gather*}
\rho_f\colon \ G^{\rm{op}}\rightarrow \mathrm{GL}\big(H^0\big(X,\Omega^1_X\big)\big),
\end{gather*}
the {\em canonical} representation of $G$. Given an irreducible representation $\rho$ of $G$, Chevalley and Weil \cite{Chevalley--Weil,Weil} gave a formula for the multiplicity of~$\rho$ inside~$\rho_f$ in terms of the genus of~$Y$ and the ramification data of $f$. This is called the {\em Chevalley--Weil formula}. A modern account of this result in the more general setting of algebraic curves over an algebraically closed field whose characteristic does not divide $|G|$ can be found in~\cite{Naeff}. The formula has subsequently been generalized by Nakajima \cite{Nakajima} to any coherent sheaf and any ramified cover of algebraic varieties over an any algebraically closed field (the generalization of the Chevalley--Weil formula for curves over any algebraically closed field was also treated in \cite{Kani}). The Chevalley--Weil formula can be used in conjunction with the character table of $G$ to write down explicit matrices for $\rho_f$ without having to compute a basis for $H^0\big(X,\Omega^1_X\big)$. As an application, when $g_X\geq 3$ is small explicit equations for the canonical embedding $X\rightarrow \mathbb{P}^{g_X-1}$ can be found using this model for $\rho_f$~\cite{Streit}.

In this article we generalize the Chevalley--Weil formula to ramified Galois covers of {\em orbifold curves} (also known as {\em Deligne--Mumford curves} or {\em stacky curves}), under the mild assumption that the ramification locus is disjoint from the locus of orbifold points in~$Y$. We work over the complex numbers, but the same exact arguments go through unchanged for an arbitrary algebraically closed field whose characteristic does not divide $|G|$. The proof is contained in Section~\ref{section:CWFormula}. The structure of the proof follows the excellent exposition in \cite{Naeff}, with the necessary generalizations and extra computations needed in the orbifold case. We have also simplified the arguments when possible by referring to the literature.

Suppose now that the base orbifold curve $Y=X(1)$ is the compactification of the orbifold quotient $\mathfrak{h}/\mathrm{PSL}_2(\mathbb{Z})$, the reduced modular orbifold. The Galois covers of $X(1)$ are modular curves $X(\Gamma)$ corresponding to normal subgroups $\Gamma\triangleleft\mathrm{PSL}_2(\mathbb{Z})$. The canonical representations $\rho_{\Gamma}$ in this case can be viewed as representations of $\mathrm{PSL}_2(\mathbb{Z})^{\rm{op}}$ factoring through the finite quotient $G:= \mathrm{PSL}_2(\mathbb{Z})/\Gamma$. Our orbifold Chevalley--Weil formula in this case applies and we are able to compute explicit matrices for $\rho_{\Gamma}$ whenever all its factors are of dimension $\leq 6$, by consulting the wealth of information available on the representation theory of $\mathrm{PSL}_2(\mathbb{Z})$ (e.g., \cite{LeBruyn, Mason2,TubaWenzl}). We do so in Section~\ref{section:modularCurves}. In Section~\ref{section:principalCongruenceSubgroups} we specialize to the case of principal congruence subgroups $\Gamma = \Gamma(p)$, for $p \geq 5$ a prime. In this case $G \simeq \mathrm{PSL}_2(\mathbb{F}_p)$ and the character tables of these groups have been known since Frobenius and Schur. For each irreducible representation $\rho$ appearing in the character table of $\mathrm{PSL}_2(\mathbb{F}_p)$, we find a formula for the multiplicity of $\rho$ inside the canonical representation $\rho_{\Gamma(p)}$. We then apply our computations to compute the canonical representation for the Klein quartic, which can be uniformized by the principal congruence subgroup~$\Gamma(7)$. In Section~\ref{section:FermatCurves}, we present a uniformization of Fermat curves $F_N$ of exponent $N$ by normal subgroups $\Phi(N)\triangleleft \mathrm{PSL}_2(\mathbb{Z})$, which exhibits $F_N$ as a Galois cover of $X(1)$ with Galois group isomorphic to $(\mathbb{Z}/N\mathbb{Z})^2\rtimes S_3$. This is known to be the full automorphism group of $F_N$ \cite{Tzermias}, and is thus interesting to compute the decomposition of the canonical representation $\rho_{\Phi(N)}$. We do so using our Chevalley--Weil formula in Theorems~\ref{thm:CWFermatCurveNDoesNotDivide3} and~\ref{thm:CWFermatCurve3DividesN} below. As shown in~\cite{Barraza-Rojas}, the decomposition of $\rho_{\Phi(N)}$ into irreducible representations gives a corresponding `group algebra' decomposition of the Jacobian variety of $F_N$. Decomposing the Jacobian variety of $F_N$ is an important problem that has been previosuly considered by many in algebraic geometry and number theory, including Noriko Yui \cite{Yui}.

\section{Vector bundles over orbifold curves}

In this section we recall a few basic facts about vector bundles on orbifold curves. Good references for these facts are \cite{FurutaSteer} and \cite{Nasatyr-Steer}. Alternatively, the reader may consult the literature on locally free sheaves over {\em Deligne--Mumford curves} or {\em stacky curves} \cite{behrend-noohi}. In this article an {\em orbifold curve} $X$ is a compact, connected, complex orbifold of dimension one with finitely many orbifold points $(P_1, \ldots, P_n)$ with non-trivial cyclic stabilizers of orders $(p_1, \ldots, p_n)$, respectively. We assume that $X$ is generically a Riemann surface (thus a {\em reduced} orbifold). The {\em genus} of~$X$, denoted by~$g_X$, is the genus of its underlying Riemann surface $\bar{X}$ (this coincides with the {\em coarse moduli space} if~$X$ is viewed as a Deligne--Mumford curve). Let $\alpha_1,\ldots, \alpha_{g_X},\beta_1,\ldots,\beta_{g_X}$ be a set of standard generators for the fundamental group of $\bar{X}$. The {\em orbifold fundamental group} of $X$ (with respect to some base-point $b$) can be defined as
\begin{gather}
\pi_1(X;b):= \big\{ \alpha_1,\ldots, \alpha_{g_X},\beta_1,\ldots,\beta_{g_X},\gamma_{P_1},\ldots, \gamma_{P_n}\colon \nonumber\\
\hphantom{\pi_1(X;b):= \big\{}{} \gamma_{P_1}^{p_1}=1,\ldots,\gamma_{P_n}^{p_n}=1, \gamma_{P_1}\cdots\gamma_{P_n}[\alpha_1,\beta_1]\cdots[\alpha_{g_X},\beta_{g_X}] =1 \big\}, \label{eq:fundamentalGroup}
\end{gather}
where $\gamma_{P_i}$ is an oriented generator of the stabilizer of the orbifold point $P_i$, $i=1,\ldots, n$. Let $\mathrm{Pic}(X)$ be the group under $\otimes$ of all line bundles (i.e invertible sheaves) over $X$ up to isomorphism, and let
\begin{gather*}
\deg\colon \ \mathrm{Pic}(X) \longrightarrow \frac{1}{m}\mathbb{Z}, \qquad m = \mathrm{lcm}(p_1,\ldots,p_n)
\end{gather*}
be the usual `degree' homomorphism, obtained by summing divisor multiplicities, possibly rational numbers at the orbifold points \cite[Section~1B]{Nasatyr-Steer}. For example, the sheaf $\Omega^1_X$ of holomorphic differential 1-forms on $X$ satisfies \cite[Section~1]{FurutaSteer}, \cite[Section~1A]{Nasatyr-Steer}
\begin{gather}\label{equation:canonicalDegree}
\deg \Omega^1_X = 2g_X - 2 + \sum_{i=1}^n \frac{p_i-1}{p_i} = 2g_X - 2 + n - \sum_{i=1}^n \frac{1}{p_i}.
\end{gather}

Let $\mathcal{V}$ be a vector bundle over $X$ (i.e., a locally free sheaf of finite rank). The {\em degree} of a~vector bundle is the rational number $\deg \det \mathcal{V}$. The fiber of $\mathcal{V}$ at each orbifold point $P_i$ gives a~$\mathrm{rk}(\mathcal{V})$-dimensional representation
\begin{gather*}
\mu(\mathcal{V},P_i)\colon \ \langle \gamma_{P_i} \rangle \longrightarrow \mathrm{GL}_{\mathrm{rk}(\mathcal{V})}(\mathbb{C}),
\end{gather*}
which is entirely determined by the linear transformation $\mu(\mathcal{V},P_i)(\gamma_{P_i})$. This is of finite order, hence diagonalizable, with eigenvalues of the form $e^{\frac{2\pi i}{p_i} \nu_{ij}}$, $\nu_{ij} \in \{0, \ldots, p_i-1\}$, $j=1,\ldots, \mathrm{rk}(\mathcal{V})$.

\begin{Definition}[\cite{FurutaSteer}]\label{definition:isotropyTrace}
The integers $\nu_{ij} \in \{0, \ldots, p_i-1\}$, $j=1, \ldots, \mathrm{rk}(\mathcal{V})$ are called the {\em isotropies} of $\mathcal{V}$ at $P_i$. The integer
\begin{gather*}
\iota(\mathcal{V},P_i):= \sum_{i=1}^{\mathrm{rk}(\mathcal{V})} \nu_{ij} \in \mathbb{Z}_{\geq 0}
\end{gather*}
is called the {\em isotropy trace} of $\mathcal{V}$ at $P_i$.
\end{Definition}

The {\em cohomology groups} $H^i(X,\mathcal{V})$ of a vector bundle $\mathcal{V}$ over $X$ are defined as the sheaf cohomology over the site defined by $X$. Denote by
\begin{gather*}
\chi(X,\mathcal{V}) := \dim H^0(X,\mathcal{V}) - \dim H^1(X,\mathcal{V})
\end{gather*}
the Euler characteristic of the vector bundle $\mathcal{V}$ over $X$. We have:

\begin{Theorem}[Riemann--Roch theorem for orbifold curves]\label{theorem:RRochOrbifold}
Let $\mathcal{V}$ be a vector bundle over an orbifold curve $X$. Then
\begin{gather*}
\chi(X,\mathcal{V}) = \mathrm{rk}(\mathcal{V})(1 - g_X) + \deg(\mathcal{V}) - \left( \sum_{i=1}^n \frac{\iota(\mathcal{V},P_i)}{p_i} \right).
\end{gather*}
\end{Theorem}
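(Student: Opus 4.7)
The plan is to reduce the formula to the classical Riemann--Roch theorem on the coarse moduli space $\bar{X}$ by pushing forward along the natural projection $\pi\colon X\to\bar{X}$. Every vector bundle $\mathcal{V}$ on an orbifold curve admits a filtration $0=\mathcal{V}_0\subset\mathcal{V}_1\subset\cdots\subset\mathcal{V}_r=\mathcal{V}$ whose successive quotients $\mathcal{L}_k=\mathcal{V}_k/\mathcal{V}_{k-1}$ are line bundles: one produces a line subbundle by twisting $\mathcal{V}$ into the ample cone, picking a non-zero section, and saturating the resulting inclusion, then inducts on rank. Rank, degree (via $\det$), and Euler characteristic are all additive under short exact sequences, and so is the isotropy trace, because the fibre representation at $P_i$ of an extension splits as the direct sum of the sub- and quotient representations (Maschke's theorem, since $|\mu_{p_i}|$ is invertible in $\mathbb{C}$). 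It therefore suffices to establish the formula when $\mathcal{V}=\mathcal{L}$ is a line bundle.

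To handle the line bundle case I would compare $\mathcal{L}$ with its pushforward $\pi_*\mathcal{L}$. Because the stabilisers $\mu_{p_i}$ are finite of order invertible in $\mathbb{C}$, taking $\mu_{p_i}$-invariants is exact, so $R^k\pi_*=0$ for $k\geq 1$, and the Leray spectral sequence collapses to $\chi(X,\mathcal{L})=\chi(\bar{X},\pi_*\mathcal{L})$. A local chart at $P_i$ identifies the orbifold with $[D/\mu_{p_i}]$; writing $z_i$ for the uniformiser on the local cover and $e$ for a local generator of $\mathcal{L}$ of $\mu_{p_i}$-weight $\nu_i$, a direct calculation shows that the invariant sections near $\pi(P_i)$ form a free rank-one module over $\mathcal{O}_{\bar{X},\pi(P_i)}=\mathbb{C}\{z_i^{p_i}\}$ with generator $z_i^{\nu_i}e$. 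In particular $\pi_*\mathcal{L}$ is a genuine line bundle on $\bar{X}$.

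The same local computation identifies the counit $\pi^*\pi_*\mathcal{L}\hookrightarrow\mathcal{L}$ with the inclusion $\mathcal{L}\bigl(-\sum_i\nu_iP_i\bigr)\hookrightarrow\mathcal{L}$, where $\mathcal{O}_X(P_i)$ is the orbifold line bundle characterised by $\mathcal{O}_X(P_i)^{\otimes p_i}=\pi^*\mathcal{O}_{\bar{X}}(\pi(P_i))$ and hence of degree $1/p_i$. Since $\pi^*$ preserves degree,
\begin{gather*}
\deg(\pi_*\mathcal{L})=\deg(\pi^*\pi_*\mathcal{L})=\deg(\mathcal{L})-\sum_{i=1}^n\frac{\nu_i}{p_i},
\end{gather*}
and classical Riemann--Roch on $\bar{X}$ then gives $\chi(X,\mathcal{L})=\chi(\bar{X},\pi_*\mathcal{L})=1-g_X+\deg(\mathcal{L})-\sum_i\nu_i/p_i$, which is the asserted formula for line bundles and, via the filtration, for $\mathcal{V}$. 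The main obstacle is the local bookkeeping at each orbifold point: the whole fractional correction $-\sum_i\iota(\mathcal{V},P_i)/p_i$ lives in the description of $\pi_*$ in a neighbourhood of $P_i$, so one must pin down the conventions for $\mu_{p_i}$-weights and for the orbifold divisor $P_i$ carefully in order for the shift $\nu_i/p_i$ to emerge with the correct sign.
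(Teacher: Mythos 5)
Your proof is correct in substance, but it follows a genuinely different route from the paper, whose entire proof is a citation: the line bundle case is quoted from Furuta--Steer (their Theorem~1.5), and the general case is dispatched with the splitting principle. You instead (a) reduce to line bundles by a filtration $0=\mathcal{V}_0\subset\cdots\subset\mathcal{V}_r=\mathcal{V}$ with line bundle quotients, using additivity of $\chi$, $\mathrm{rk}$, $\deg$ and of the isotropy trace --- the last point being the one genuinely new ingredient, and your justification (the fibre sequence at $P_i$ splits equivariantly since $\mathbb{C}[\mu_{p_i}]$ is semisimple, so the multiset of isotropies of the extension is the union of those of sub and quotient) is exactly what is needed; and (b) prove the line bundle case from scratch via $R^{k\geq1}\pi_*=0$, $\chi(X,\mathcal{L})=\chi(\bar{X},\pi_*\mathcal{L})$, and the local identification of $\pi_*\mathcal{L}$ near $\pi(P_i)$, which makes visible where the correction $-\nu_i/p_i$ comes from. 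What each approach buys: the paper's argument is two lines but leans on the literature and on the splitting principle, whose applicability (compatibility of all four invariants with pullback to a flag bundle over a stacky curve) is itself not spelled out; your filtration argument is elementary and self-contained, at the cost of the local bookkeeping you flag --- the weight convention determines whether the invariant generator is $z^{\nu_i}e$ or $z^{p_i-\nu_i}e$, and with the paper's convention (the stabilizer acts on the fibre $\mathcal{L}\otimes k(P_i)$ with eigenvalue $e^{2\pi i\nu_i/p_i}$) you should check once that the degree drop of $\pi_*\mathcal{L}$ is indeed $\sum_i\nu_i/p_i$ and not $\sum_i(p_i-\nu_i)/p_i$; this is a finite, purely local verification and does not affect the architecture of the proof. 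One more small point worth recording explicitly: the existence of the line subbundle uses that a torsion-free coherent sheaf on an orbifold curve is locally free (an equivariant module over a DVR with finite group of order invertible in $\mathbb{C}$ admits an equivariant basis), which justifies the saturation step.
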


\begin{proof}See, e.g., \cite[Theorem~1.5]{FurutaSteer} for the case of line bundles. The general case follows from the splitting principle.
\end{proof}

In addition, the Serre duality theorem for orbifold curves \cite[Theorem~1.7]{Nasatyr-Steer} gives a canonical isomorphism
\begin{gather}\label{equation:SerreDuality}
H^1(X, \mathcal{V}) \simeq H^0\big(X, \mathcal{V}^*\otimes\Omega^1_X\big)^*,
\end{gather}
where $\mathcal{V}^*:= \operatorname{Hom}_{\mathcal{O}_X}(\mathcal{V},\mathcal{O}_X)$.

\section{The Chevalley--Weil formula}\label{section:CWFormula}

Let $f\colon X\rightarrow Y$ be a degree $d$ ramified Galois cover of orbifold curves of genus $g_X$, $g_Y$, respectively. By this we mean that $f$ is a morphism that is generically a degree $d$ finite \'{e}tale Galois cover, outside finitely many {\em ramification points} $Q_1, \ldots, Q_r \in Y$, with ramification degrees $e_1, \ldots, e_r$, respectively. We assume for simplicity that none of the $Q_i$'s are orbifold points of $Y$.

Let
 \begin{gather*}
 G:=\mathrm{Aut}_Y(X)
 \end{gather*}
be the group of covering transformations of $f$. For each ramification point $Q\in Y$, the stabilizer in $G$ of any point $ R\stackrel{f}\longmapsto Q$ of $X$ is a cyclic subgroup $G_{R} \subseteq G$ of order $e$, the ramification degree of $Q$. Choose generators $\gamma_{R} \in G_{R}$ for all such `local monodromy' groups. Note that if $R,R' \stackrel{f}\longmapsto Q$ are points of $X$ lying over $Q$, then $G_{R}$ and $G_{R'}$ are conjugate to each other and there exists $g\in G$ with
\begin{gather} \label{eq:conjugateLocalMonodoromy}
g^{-1} \gamma_R g = \gamma_{R'}.
\end{gather}
For each orbifold point $P_j \in Y$, $j=1, \ldots, n$, let $\gamma_{P_j}\in \pi_1(Y\backslash \{Q_1,\ldots,Q_r\};b)$ be an oriented generator for the stabilizer of the point $P_j$, as in \eqref{eq:fundamentalGroup}. We will abuse notation and also denote by $\gamma_{P_j}$ the image via the quotient map $\pi_1(Y\backslash \{Q_1,\ldots,Q_r\};b)\rightarrow G$.

The sheaf $f_*\mathcal{O}_X$ is a vector bundle over $Y$ of rank $d$, together with a right action of $G$ by pull-back of functions. This vector bundle is generically a finite \'{e}tale sheaf with right $G$-action, which on the stalks is just the right regular representation of $G$ on itself. The group $G$ also acts linearly on the line bundle $\Omega^1_X$ (and on $f_*\Omega^1_X$) by pull-back
\begin{gather*}
g\omega := g^*\omega,
\end{gather*}
giving the {\em canonical} representation
\begin{gather}
\label{eq:canonicalRep}
\rho_f\colon \ G^{\rm{op}} \longrightarrow \mathrm{GL}\big(H^0\big(X,\Omega^1_X\big)\big) \simeq \mathrm{GL}_{g_X}(\mathbb{C}),
\end{gather}
by taking global sections. For ease of notation we will drop the `op' super-script in what follows. Since $G$ is finite, the category of finite-dimensional representations of $G$ is semi-simple. Let~$\{\rho_i\}$ be the set of irreducible representations of $G$, and let $d_i = d(\rho_i)$ be the multiplicity~$d_i$ of each~$\rho_i$ occurring in~$\rho_f$. The {\em Chevalley--Weil formula} for algebraic curves \cite{Chevalley--Weil,Weil} is a formula for each $d_i$ in terms of global invariants of $f\colon X\rightarrow Y$ and ramification data around each local monodromy~$\gamma_R$. To generalize the formula to orbifold curves, we first set some notation. For $g\in G$ an element of exact order $N$, a representation $\rho$ of $G$, and $k = 0,\ldots, N-1$, let
\begin{gather*}
N_k(\rho; g) := \dim \ker \big( \rho(g) - e^{\frac{2\pi i}{N} k} I\big).
\end{gather*}
Our goal is to prove the following:

\begin{Theorem}[Chevalley--Weil formula]\label{theorem:Chevalley--Weil}
Let $\rho_i$ be an irreducible representation of~$G$. For each ramification point $Q_j \in Y$ choose some $R_j\stackrel{f}\mapsto Q_j$. Then the multiplicity $d_i = d(\rho_i)$ of $\rho_i$ in $\rho_f$ is given by
\begin{gather*}
d_i = \epsilon + \dim \rho_i\left(g_Y-1 + n - \sum_{j=1}^n \frac{1}{p_j} \right) + \sum_{j=1}^r \sum_{k=1}^{e_j-1} N_k(\rho_i;\gamma_{R_j})\left(1 - \frac{k}{e_j}\right) \\
\hphantom{d_i =}{} - \sum_{j=1}^n \sum_{k=1}^{p_j-1} N_k\Big(e^{\frac{-2\pi i}{p_j}}\rho_i;\gamma_{P_j}\Big)\frac{k}{p_j},
\end{gather*}
where $\epsilon = 1$ if $\rho_i = 1$ and $\epsilon = 0$ otherwise.
\end{Theorem}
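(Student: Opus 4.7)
The plan is to compute $d_i$ by realising it as $\dim H^0$ of a vector bundle on $Y$ and then applying the orbifold Riemann--Roch theorem. Since $G$ acts on $f_*\Omega^1_X$ and $\mathbb{C}[G]$-modules are semisimple, the $G$-equivariant locally free sheaf $f_*\Omega^1_X$ admits an isotypic decomposition
\begin{gather*}
f_*\Omega^1_X \cong \bigoplus_i V_i \otimes_{\mathbb{C}} \mathcal{E}_i, \qquad \mathcal{E}_i := \mathrm{Hom}_G\big(V_i, f_*\Omega^1_X\big),
\end{gather*}
where $V_i$ is the representation space of $\rho_i$. Because $H^0(X,\Omega^1_X) = H^0(Y, f_*\Omega^1_X)$, the multiplicity is $d_i = \dim H^0(Y, \mathcal{E}_i)$.

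To handle $H^1(Y,\mathcal{E}_i)$, I would apply Grothendieck duality for the finite flat morphism $f$ to obtain the $G$-equivariant isomorphism $f_*\Omega^1_X \cong \mathcal{H}om_{\mathcal{O}_Y}(f_*\mathcal{O}_X, \Omega^1_Y)$. Combining this with the analogous isotypic decomposition $f_*\mathcal{O}_X \cong \bigoplus_i V_i^* \otimes \mathcal{F}_i$ identifies $\mathcal{E}_i \cong \mathcal{F}_i^\vee \otimes \Omega^1_Y$; Serre duality~\eqref{equation:SerreDuality} then gives $H^1(Y,\mathcal{E}_i) \cong H^0(Y,\mathcal{F}_i)^*$, whose dimension equals the multiplicity of $V_i^*$ inside $H^0(X,\mathcal{O}_X) = \mathbb{C}$ (the trivial $G$-representation), which is precisely $\epsilon$.

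Applying Theorem~\ref{theorem:RRochOrbifold} to $\mathcal{E}_i$ now yields
\begin{gather*}
d_i - \epsilon = \mathrm{rk}(\mathcal{E}_i)(1 - g_Y) + \deg \mathcal{E}_i - \sum_{j=1}^n \frac{\iota(\mathcal{E}_i, P_j)}{p_j}.
\end{gather*}
Because $f_*\mathcal{O}_X$ is generically the regular representation of $G$, one has $\mathrm{rk}(\mathcal{E}_i) = \dim \rho_i$. Substituting~\eqref{equation:canonicalDegree} for $\deg \Omega^1_Y$ and expanding $\deg \mathcal{E}_i = -\deg \mathcal{F}_i + \dim \rho_i \cdot \deg \Omega^1_Y$ absorbs the term $\dim \rho_i\,(g_Y - 1 + n - \sum_j 1/p_j)$ of the theorem statement, reducing the proof to matching $-\deg \mathcal{F}_i$ with the ramification sum and $-\sum_j \iota(\mathcal{E}_i, P_j)/p_j$ with the orbifold sum.

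The hard part is these local computations. At a ramification point $Q_j$ (non-orbifold by hypothesis), I would choose $R_j \in f^{-1}(Q_j)$ and decompose the completed stalk of $f_*\mathcal{O}_X$ at $Q_j$ into eigenspaces for the cyclic action of $\langle \gamma_{R_j}\rangle$; matching these with the eigenspace decomposition of $\rho_i(\gamma_{R_j})$ and computing the resulting contribution to $\deg \mathcal{F}_i$ produces the summand $\sum_{k=1}^{e_j-1} N_k(\rho_i;\gamma_{R_j})(1 - k/e_j)$, the conjugation relation~\eqref{eq:conjugateLocalMonodoromy} ensuring independence from the choice of $R_j$. At an orbifold point $P_j$ (unramified by hypothesis), $f$ is \'{e}tale, so the orbifold monodromy $\gamma_{P_j}$ acts on the fiber of $\mathcal{E}_i = \mathcal{F}_i^\vee \otimes \Omega^1_Y$ via $\rho_i(\gamma_{P_j})$ twisted by the cotangent character $e^{-2\pi i/p_j}$ on $(\Omega^1_Y)_{P_j}$; counting eigenvalue exponents per Definition~\ref{definition:isotropyTrace} gives $\iota(\mathcal{E}_i, P_j) = \sum_{k=1}^{p_j-1} k\, N_k\big(e^{-2\pi i/p_j}\rho_i;\gamma_{P_j}\big)$, yielding the last sum in the formula. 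Assembling all contributions recovers the stated expression.
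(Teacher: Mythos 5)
Your outline is correct, and it takes a genuinely different route from the paper. The paper pushes forward the exact sequence $0 \to f^*\Omega^1_Y \to \Omega^1_X \to \Omega^1_{X/Y} \to 0$, takes $\rho_i$-isotypic components (of rank $(\dim\rho_i)^2$, not your rank-$\dim\rho_i$ Hom-bundles $\mathcal{E}_i$ -- a purely cosmetic difference), and then computes $\chi$ additively: the torsion piece $\big(f_*\Omega^1_{X/Y}\big)^{\rho_i}$ contributes $\dim\rho_i\sum_j \operatorname{rk}(\rho_i(\gamma_{R_j})-I)$ by quoting the curve case from Naeff, while $\deg\big(f_*\mathcal{O}_X^{\rho_i}\big)$ is obtained by quoting the Mehta--Seshadri degree formula for the parabolic bundle attached to $\rho_i$; the $\epsilon$ term comes from Serre duality on $X$ ($H^1(X,\Omega^1_X)\simeq H^0(X,\mathcal{O}_X)^*$ has trivial $G$-action). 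You instead apply duality for the finite morphism $f$ up front, $f_*\Omega^1_X \simeq \mathcal{H}om_{\mathcal{O}_Y}(f_*\mathcal{O}_X,\Omega^1_Y)$, identify $\mathcal{E}_i \simeq \mathcal{F}_i^{\vee}\otimes\Omega^1_Y$, and run Riemann--Roch once on $\mathcal{E}_i$; this is attractive because the two separate ramification contributions of the paper collapse into the single quantity $-\deg\mathcal{F}_i = \sum_j\sum_{k=1}^{e_j-1}N_k(\rho_i;\gamma_{R_j})\big(1-\tfrac{k}{e_j}\big)$ (your dual convention $f_*\mathcal{O}_X\simeq\bigoplus_i V_i^*\otimes\mathcal{F}_i$ is exactly what makes this come out right, since $N_k(\rho_i^*;\gamma)=N_{e_j-k}(\rho_i;\gamma)$), and your $\epsilon$ term via Serre duality on $Y$ is equivalent to the paper's Proposition~\ref{prop:GActionOnH1Trivial}. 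What your route buys is a more self-contained argument that avoids both the relative-differentials computation and the appeal to parabolic bundles; what it costs is that you must actually justify the duality isomorphism in the orbifold setting (easy here, since $f$ is \'etale near the orbifold points and the $Q_j$ are ordinary points, so it reduces to the classical statement for curves) and carry out the completed-stalk eigenspace computation of $\deg\mathcal{F}_i$ yourself, where the orientation of $\gamma_{R_j}$ and left/right ($G$ versus $G^{\rm op}$) conventions are exactly where signs can go wrong; the same caveat applies to your identification of the stabilizer action on the fiber of $\mathcal{E}_i$ at $P_j$ as $e^{-2\pi i/p_j}\rho_i(\gamma_{P_j})$, which is asserted at the same level of detail as the paper's Proposition~\ref{prop:isotropyV}. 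With those local computations written out, your plan assembles to the stated formula.
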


\begin{Remark}Note that by \eqref{eq:conjugateLocalMonodoromy} the integers $N_{\rho_i}(\gamma_{R_j}; k)$ do not depend on the choice of point $R_j\stackrel{f}\mapsto Q_j$.
\end{Remark}

To prove Theorem~\ref{theorem:Chevalley--Weil} we proceed by a series of reductions, as in the case of algebraic cur\-ves~\cite{Naeff}. First, since $f$ is generically \'{e}tale we have an exact sequence
\begin{gather*}
0 \rightarrow f^*\Omega^1_Y \rightarrow \Omega^1_X \rightarrow \Omega^1_{X/Y} \rightarrow 0.
\end{gather*}
Applying $f_*$ to this exact sequence and the projection formula on the first term we get
\begin{gather}\label{equation:exactSeqf*}
0 \rightarrow \Omega^1_Y\otimes f_*\mathcal{O}_{X} \rightarrow f_*\Omega^1_X \rightarrow f_*\Omega^1_{X/Y} \rightarrow 0.
\end{gather}
This is an exact sequence of $\mathcal{O}_Y$-modules with right $\mathcal{O}_Y$-linear $G$-action. For any irreducible representation $\rho_i$ of $G$ we can take the $\rho_i$-isotypical component of this exact sequence and obtain (note that the action of $G$ on $Y$ is trivial):
\begin{gather*}
0 \rightarrow \Omega^1_Y\otimes f_*\mathcal{O}_{X}^{\rho_i} \rightarrow \big(f_*\Omega^1_X\big)^{\rho_i} \rightarrow \big(f_*\Omega_{_{X/Y}}^1\big)^{\rho_i} \rightarrow 0.
\end{gather*}
To prove Theorem~\ref{theorem:Chevalley--Weil} we have to compute
\begin{gather*}
\dim H^0\big(Y, \big(f_*\Omega^1_X\big)^{\rho_i}\big) = \dim H^0\big(X, \Omega^1_X\big)^{\rho_i} = d_i \cdot \dim \rho_i.
\end{gather*}
To do so we compute the Euler characteristic:
\begin{gather*}
\chi\big(Y, \big(f_*\Omega^1_X\big)^{\rho_i}\big) = \dim H^0\big(X, \Omega^1_X\big)^{\rho_i} - \dim H^1\big(X, \Omega^1_X\big)^{\rho_i}.
\end{gather*}
\begin{Proposition}\label{prop:GActionOnH1Trivial} The action of $G$ on $H^1\big(X, \Omega^1_X\big)$ is trivial. Therefore
\begin{gather*}
\dim H^1\big(X, \Omega^1_X\big)^{\rho_i} = \epsilon = \begin{cases} 1& \text{if } \rho_i = 1, \\
0& \text{if } \rho_i \neq 1,
 \end{cases}
\end{gather*}
for any irreducible representation $\rho_i$ of $G$.
\end{Proposition}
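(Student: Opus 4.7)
The plan is to reduce everything to the one-dimensional Serre dual. By the Serre duality isomorphism \eqref{equation:SerreDuality} applied to $\mathcal{V} = \Omega^1_X$, we have
\begin{gather*}
H^1\big(X,\Omega^1_X\big) \simeq H^0\big(X, \big(\Omega^1_X\big)^*\otimes \Omega^1_X\big)^* \simeq H^0(X,\mathcal{O}_X)^*.
\end{gather*}
Since $X$ is compact and connected, $H^0(X,\mathcal{O}_X) \simeq \mathbb{C}$, consisting of the constant functions. Hence $H^1(X,\Omega^1_X)$ is one-dimensional.

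The key observation is then that the $G$-action on $H^0(X,\mathcal{O}_X)$ is trivial: $G$ acts on functions by pullback, and the pullback of any constant function is the same constant. The isomorphism above is induced by cup product and the trace map, both of which are natural and therefore $G$-equivariant. Consequently the induced $G$-action on the dual space $H^1(X,\Omega^1_X)$ is also trivial.

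Once triviality is established, the dimension count is immediate. The one-dimensional trivial representation has its $\rho_i$-isotypical component equal to itself when $\rho_i$ is the trivial representation $\mathbf{1}$, and zero otherwise, yielding
\begin{gather*}
\dim H^1\big(X,\Omega^1_X\big)^{\rho_i} = \epsilon.
\end{gather*}

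The only point that requires a little care is the $G$-equivariance of Serre duality for orbifold curves (as opposed to ordinary Riemann surfaces). This is essentially automatic because the isomorphism in \eqref{equation:SerreDuality}, coming from the general Serre duality for Deligne--Mumford curves, is constructed functorially via cup product with a canonical generator of $H^1(X,\Omega^1_X)$ (the orbifold trace/residue), and functoriality forces equivariance under any group of automorphisms. So the main ``obstacle'' is really just invoking the naturality of this standard duality in the orbifold setting, rather than any substantive computation.
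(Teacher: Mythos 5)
Your proof is correct and follows essentially the same route as the paper: both apply the orbifold Serre duality isomorphism \eqref{equation:SerreDuality} to identify $H^1\big(X,\Omega^1_X\big)$ with $H^0(X,\mathcal{O}_X)^*$ $G$-equivariantly, observe that $G$ acts trivially on constants, and conclude the contragredient action is trivial. Your extra remarks on the naturality of the duality and the explicit dimension count are just a more detailed spelling-out of the same argument.
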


\begin{proof}By Serre duality \eqref{equation:SerreDuality} there is a canonical isomorphism $H^1\big(X,\Omega^1_X\big) \simeq H^0(X,\mathcal{O}_X)^*$, commuting with the linear $G$-action on both sides. Since $H^0(X,\mathcal{O}_X)\simeq \mathbb{C}$ consists of constant functions, the action of $G$ on this vector space is trivial. Therefore its contragradient action on $H^1\big(X,\Omega^1_X\big)$ is trivial as well.
\end{proof}

By Proposition~\ref{prop:GActionOnH1Trivial}, to compute $d_i$ it suffices to compute $\chi\big(Y, \big(f_*\Omega^1_X\big)^{\rho_i}\big)$. Since Euler cha\-racteristics are additive with respect to exact sequences, we must have
\begin{gather*}
\chi\big(Y, \big(f_*\Omega^1_X\big)^{\rho_i}\big) = \chi\big(Y, \big(f_*\Omega_{_{X/Y}}^1\big)^{\rho_i} \big) + \chi\big(Y,\Omega^1_Y\otimes f_*\mathcal{O}_{X}^{\rho_i}\big),
\end{gather*}
by applying $\chi(Y,-)$ to \eqref{equation:exactSeqf*}. We compute each summand in Propositions~\ref{prop:ramificationPiece} through~\ref{prop:isotropyV} below.

\begin{Proposition}\label{prop:ramificationPiece} Let $\rho_i$ be an irreducible representation of $G$. For each ramification point $Q_j \in Y$ choose some $R_j\stackrel{f}\rightarrow Q_j$. Then
\begin{gather*}
\chi\big(Y, \big(f_*\Omega_{X/Y}^1\big)^{\rho_i} \big) = \dim \rho_i \sum_{j=1}^r \mathrm{rk} ( \rho_i(\gamma_{R_j}) - I ).
\end{gather*}
\end{Proposition}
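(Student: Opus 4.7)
The plan is to localize the computation at the ramification points. Since $f$ is \'etale away from the preimages of $Q_1,\ldots,Q_r$, the relative cotangent sheaf $\Omega^1_{X/Y}$ is a torsion sheaf supported on the ramification divisor, so $f_*\Omega^1_{X/Y}$ is a skyscraper sheaf on $Y$ supported at $Q_1,\ldots,Q_r$. Its higher cohomology therefore vanishes, and
\[
\chi\bigl(Y,(f_*\Omega^1_{X/Y})^{\rho_i}\bigr) \;=\; \dim \bigl((f_*\Omega^1_{X/Y})^{\rho_i}\bigr) \;=\; \sum_{j=1}^{r}\dim \bigl((f_*\Omega^1_{X/Y})_{Q_j}^{\rho_i}\bigr),
\]
which reduces the problem to computing, for each $j$, the dimension of the $\rho_i$-isotypical piece of the stalk at $Q_j$.

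For fixed $j$, the stalk decomposes as $(f_*\Omega^1_{X/Y})_{Q_j}\simeq \bigoplus_{R\mapsto Q_j}\Omega^1_{X/Y,R}$, and $G$ acts by permuting the preimages together with the local action of the stabilizer at each $R$. This identifies the stalk with the induced $G$-representation $\mathrm{Ind}_{G_{R_j}}^{G} V_j$, where $V_j:=\Omega^1_{X/Y,R_j}$ is a representation of the cyclic stabilizer $G_{R_j}=\langle \gamma_{R_j}\rangle$ of order $e_j$. A short local computation in coordinates with $y=x^{e_j}$ shows that $V_j$ has dimension $e_j-1$ with basis $dx,\,x\,dx,\ldots,x^{e_j-2}dx$, and that $\gamma_{R_j}$ acts on $x$ by a primitive $e_j$-th root of unity; hence $V_j$ decomposes as the direct sum of all non-trivial characters of $G_{R_j}$, each appearing with multiplicity one. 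By Frobenius reciprocity, the multiplicity of $\rho_i$ in $\mathrm{Ind}_{G_{R_j}}^{G} V_j$ equals the multiplicity of $V_j$ in the restriction $\rho_i\big|_{G_{R_j}}$, which is $\sum_{k=1}^{e_j-1}N_k(\rho_i;\gamma_{R_j})$.

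To conclude, since $\rho_i(\gamma_{R_j})$ is diagonalizable with $e_j$-th roots of unity as eigenvalues,
\[
\sum_{k=1}^{e_j-1}N_k(\rho_i;\gamma_{R_j}) \;=\; \dim\rho_i - N_0(\rho_i;\gamma_{R_j}) \;=\; \mathrm{rk}\bigl(\rho_i(\gamma_{R_j})-I\bigr).
\]
Multiplying by $\dim\rho_i$ (the total size of each $\rho_i$-isotypical component per unit of multiplicity) and summing over $j=1,\ldots,r$ yields the claimed formula. The main subtlety I anticipate is keeping track of the orientation convention for $\gamma_{R_j}$ relative to the local uniformizer; however, the argument only needs that every non-trivial character of $G_{R_j}$ appears exactly once in $V_j$, and \eqref{eq:conjugateLocalMonodoromy} guarantees independence of the choice of preimage $R_j$, so these technicalities do not affect the final sum.
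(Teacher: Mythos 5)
Your proof is correct and follows essentially the same route as the paper, which for this proposition simply defers to Naeff's argument for ordinary algebraic curves on the grounds that the computation is local at the ramification points and these are disjoint from the orbifold locus; your write-up is exactly that classical argument spelled out (skyscraper sheaf, stalk as $\mathrm{Ind}_{G_{R_j}}^{G}\Omega^1_{X/Y,R_j}$, the local model $y=x^{e_j}$ giving each non-trivial character of $G_{R_j}$ once, and Frobenius reciprocity), and your observation that summing over all non-trivial eigenvalues makes the orientation convention irrelevant correctly handles the only delicate point.
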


\begin{proof} The proof given in \cite{Naeff} for algebraic curves goes through unchanged, since the question is localized at the ramification locus (and disjoint from the orbifold locus).
\end{proof}

It remains to compute the Euler characteristic $\chi\big(Y,\Omega^1_Y\otimes f_*\mathcal{O}_{X}^{\rho_i}\big)$. Let $\mathcal{V} = \Omega^1_Y\otimes f_*\mathcal{O}_{X}^{\rho_i}$. By Theorem~\ref{theorem:RRochOrbifold},
\begin{gather*}
 \chi(Y,\mathcal{V}) = \mathrm{rk}(\mathcal{V})(1-g_Y) + \deg \mathcal{V} - \left( \sum_{i=1}^n \frac{\iota(\mathcal{V},P_i)}{p_i} \right).
\end{gather*}
We compute each term in turn.
\begin{Proposition}\label{prop:rankV}
\begin{gather*}
\mathrm{rk} \mathcal{V} = (\dim \rho_i)^2.
\end{gather*}
\end{Proposition}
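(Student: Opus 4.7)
The plan is to reduce the rank computation to a local statement at a generic point, where $f$ is étale and the structure of $f_*\mathcal{O}_X$ is controlled by the regular representation of $G$. Since $\Omega^1_Y$ is a line bundle, tensoring with it does not change the rank, so it is enough to prove that $\mathrm{rk}(f_*\mathcal{O}_X^{\rho_i}) = (\dim \rho_i)^2$.

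First I would recall that, as noted earlier in the paper, $f_*\mathcal{O}_X$ is a locally free $\mathcal{O}_Y$-module of rank $d = |G|$, and that over the étale locus $Y \setminus \{Q_1,\ldots,Q_r\}$ its fiber at any point carries the right regular representation of $G$. The $\rho_i$-isotypical decomposition of a vector bundle with linear $G$-action is compatible with taking stalks, so the stalk of $f_*\mathcal{O}_X^{\rho_i}$ at a generic point of $Y$ is precisely the $\rho_i$-isotypical component of the regular representation $\mathbb{C}[G]$. By the standard decomposition
\begin{gather*}
\mathbb{C}[G] \simeq \bigoplus_i \rho_i^{\oplus \dim \rho_i},
\end{gather*}
this isotypical component has dimension $(\dim \rho_i)(\dim \rho_i) = (\dim \rho_i)^2$.

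Since $f_*\mathcal{O}_X^{\rho_i}$ is a coherent $\mathcal{O}_Y$-summand of the locally free sheaf $f_*\mathcal{O}_X$ and $Y$ is a smooth (orbifold) curve, $f_*\mathcal{O}_X^{\rho_i}$ is itself locally free, hence has constant rank equal to its generic rank $(\dim \rho_i)^2$. Tensoring with the line bundle $\Omega^1_Y$ then gives $\mathrm{rk}(\mathcal{V}) = (\dim \rho_i)^2$, as required.

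There is no real obstacle here: the only subtlety is making sure that the isotypical decomposition does split $f_*\mathcal{O}_X$ as a direct sum of locally free $\mathcal{O}_Y$-modules, which follows from the fact that the $G$-action is $\mathcal{O}_Y$-linear and $|G|$ is invertible in $\mathbb{C}$ (so that the projector onto the $\rho_i$-isotypical component is a well-defined $\mathcal{O}_Y$-linear endomorphism of $f_*\mathcal{O}_X$).
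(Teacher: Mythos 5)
Your proof is correct and follows essentially the same route as the paper: compute the fiber at a generic (unramified, non-orbifold) point, where it is the regular representation of $G$, so the $\rho_i$-isotypical piece has dimension $(\dim\rho_i)^2$, and tensoring with the line bundle $\Omega^1_Y$ does not change the rank. The only difference is that you spell out the local freeness of the isotypical summand via the averaging projector, a point the paper leaves implicit.
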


\begin{proof}The vector bundle $f_*\mathcal{O}_{X}$ is a finite \'{e}tale sheaf of rank $d$ away from the ramification points, and the same holds for $f_*\mathcal{O}_{X}^{\rho_i}$. At a point $P\in Y$, $P\notin \{Q_1, \ldots, Q_r\}$, the fiber of $f_*\mathcal{O}_{X}$ is the right regular $G$-representation. As is well-known, the representation $\rho_i$ occurs inside the regular representation $\dim \rho_i$ times. Therefore the fiber at $P$ of $f_*\mathcal{O}_{X}^{\rho_i}$ has dimension $(\dim \rho_i)^2$ and that is also the rank of $f_*\mathcal{O}_{X}^{\rho_i}$ as a vector bundle over~$Y$. Since $\Omega^1_Y$ has rank one, the rank of~$\mathcal{V}$ is also of rank $(\dim \rho_i)^2$.
\end{proof}

To compute the degree of $\mathcal{V}$, note that
\begin{gather*}
\deg(\mathcal{V}) = (\dim \rho_i)^2 \deg \big(\Omega^1_Y\big) + \deg \big(f_*\mathcal{O}_{X}^{\rho_i}\big)\\
\hphantom{\deg(\mathcal{V})}{} = (\dim \rho_i)^2\left(2g_Y - 2 + n - \sum_{j=1}^n \frac{1}{p_j} \right) + \deg \big(f_*\mathcal{O}_{X}^{\rho_i}\big)
\end{gather*}
by \eqref{equation:canonicalDegree}. It remains to compute $\deg (f_*\mathcal{O}_{X}^{\rho_i})$, as follows.

\begin{Proposition}\label{prop:parabolicPiece}
\begin{gather*}
\deg \big(f_*\mathcal{O}_{X}^{\rho_i}\big) = -\dim \rho_i \sum_{j=1}^r \sum_{k=1}^{e_j-1} N_k(\rho_i;\gamma_{R_j})\frac{k}{e_j}.
\end{gather*}
\end{Proposition}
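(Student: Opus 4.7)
My plan is to compute $\deg(f_*\mathcal{O}_X^{\rho_i})$ via a formal-local analysis at each ramification point $Q_j$, identifying the germ of $f_*\mathcal{O}_X^{\rho_i}$ there in terms of $\gamma_{R_j}$-eigenspaces of $\rho_i$ by Frobenius reciprocity and summing the local contributions. As a preliminary reduction, I would pass to the auxiliary locally free sheaf $\mathcal{F}_i := \underline{\mathrm{Hom}}_G(\underline{\rho_i}, f_*\mathcal{O}_X)$ of rank $\dim\rho_i$: the evaluation map gives an $\mathcal{O}_Y$-linear isomorphism $\underline{\rho_i}\otimes_\mathbb{C}\mathcal{F}_i \xrightarrow{\sim} f_*\mathcal{O}_X^{\rho_i}$, and since $\det\underline{\rho_i}$ is a trivial line bundle on $Y$, the identity $\det(V\otimes W) = (\det V)^{\mathrm{rk}\, W}\otimes(\det W)^{\mathrm{rk}\, V}$ yields $\deg f_*\mathcal{O}_X^{\rho_i} = (\dim\rho_i)\deg\mathcal{F}_i$; it therefore suffices to show $\deg\mathcal{F}_i = -\sum_{j=1}^r\sum_{k=1}^{e_j-1}N_k(\rho_i;\gamma_{R_j})\,k/e_j$.

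For the local analysis, fix $Q_j$ and a preimage $R_j$, with formal uniformizers $t$ on $Y$ at $Q_j$ and $\pi$ on $X$ at $R_j$ chosen so that $\pi^{e_j}=t$ (up to unit) and $\gamma_{R_j}^*\pi=\zeta\pi$, $\zeta := e^{2\pi i/e_j}$. The formal germ of $f_*\mathcal{O}_X$ at $Q_j$ is $\mathrm{Ind}_{G_{R_j}}^G \hat{\mathcal{O}}_{X,R_j}$, so Frobenius reciprocity together with the $\gamma_{R_j}$-eigenspace decompositions of $\rho_i|_{G_{R_j}}$ (into characters $\chi_k$ with multiplicities $N_k(\rho_i;\gamma_{R_j})$) and of $\hat{\mathcal{O}}_{X,R_j}$ (whose $\zeta^k$-eigenspace is the rank-one free $\hat{\mathcal{O}}_{Y,Q_j}$-submodule generated by $\pi^k$) yields
\[
\mathcal{F}_i \otimes_{\mathcal{O}_Y} \hat{\mathcal{O}}_{Y,Q_j} \;\cong\; \bigoplus_{k=0}^{e_j-1}\bigl(\pi^k\hat{\mathcal{O}}_{Y,Q_j}\bigr)^{\oplus N_k(\rho_i;\gamma_{R_j})}.
\]
Hence $\det\mathcal{F}_i$ near $Q_j$ is generated by $\pi^{M_j}$ with $M_j := \sum_k k\,N_k(\rho_i;\gamma_{R_j})$; compared to the \'etale (multivalued, flat) trivialization of $\mathcal{F}_i$ on $Y^\circ := Y\setminus\{Q_1,\ldots,Q_r\}$ coming from the trivial $G$-torsor structure of $f|_{Y^\circ}$, this identifies $\det\mathcal{F}_i$ as a line bundle whose ``fractional zero'' at $Q_j$ has order $M_j/e_j$, giving $\deg\mathcal{F}_i = -\sum_{j=1}^r M_j/e_j$ as desired.

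The main obstacle is justifying this fractional-order step rigorously: $\pi$ is a uniformizer on $X$ (not on $Y$), and $M_j/e_j$ need not be an integer. The cleanest route is to recognize $\mathcal{F}_i$ as Deligne's canonical extension to $Y$ of the flat bundle on $Y^\circ$ with monodromy $\rho_i^*(\gamma_{R_j})$ around $Q_j$ (eigenvalue logarithms chosen in $[0,1)$), whose degree is standardly $-\sum_j M_j/e_j$. Alternatively, one adapts directly the transition-function computation for algebraic curves in \cite{Naeff}, which goes through in the orbifold setting since both the ramification points and the local computation remain disjoint from the orbifold locus of $Y$.
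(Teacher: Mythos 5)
Your preliminary reduction to the rank-$\dim\rho_i$ sheaf $\mathcal{F}_i$ with $f_*\mathcal{O}_X^{\rho_i}\simeq\rho_i\otimes\mathcal{F}_i$, and the formal-local identification of $\mathcal{F}_i$ at each $Q_j$ via Frobenius reciprocity, are correct (up to the usual bookkeeping matching the orientation $\gamma_{R_j}^*\pi=\zeta\pi$ with the definition of $N_k$ for the pullback action). Moreover your final step is, in substance, the same fact the paper invokes: the paper identifies $f_*\mathcal{O}_X^{\rho_i}$ with $\dim\rho_i$ copies of the parabolic bundle that Mehta--Seshadri attach to the unitary representation $\rho_i$ and quotes their degree formula, which is exactly your statement that the canonical extension of the flat bundle on the punctured base has degree equal to minus the sum of the local weights $k/e_j$.

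The genuine gap is in how the orbifold points of $Y$ enter that final step. The punctured base $Y\setminus\{Q_1,\ldots,Q_r\}$ is still an orbifold, and $\mathcal{F}_i$ carries finite-order isotropy at each $P_j$ (essentially $\rho_i(\gamma_{P_j})$, cf.\ Proposition~\ref{prop:isotropyV}); on an orbifold curve the degree is \emph{not} determined by lattice data at the $Q_j$ alone, because it differs from the degree of the induced bundle on the coarse space by the fractional isotropy contributions at the $P_j$. The residue formula for the degree of Deligne's canonical extension is ``standard'' on a compact Riemann surface; to use it on $Y$ you must additionally show that the flat, finite-order monodromy at the $P_j$ contributes nothing to the orbifold degree (equivalently, that a flat orbifold bundle has degree zero), and your justification for the fallback route --- that ``the ramification points and the local computation remain disjoint from the orbifold locus'' --- does not address this: carried out literally on the coarse space, the lattice-counting argument gives the wrong number. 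Concretely, take $\Gamma(2)\triangleleft\mathrm{PSL}_2(\mathbb{Z})$, so $G\simeq S_3$, and let $\rho$ be the sign character: the proposition gives $\deg f_*\mathcal{O}_X^{\rho}=-\tfrac{1}{2}$ (consistent with orbifold Riemann--Roch, using the isotropy $\rho(S)=-1$ at $[i]$), whereas the same computation on the coarse cover $X(2)\rightarrow\mathbb{P}^1_j$, branched over $j=0,1728,\infty$, yields $-1$; the discrepancy $\tfrac{1}{2}$ is precisely the isotropy term at the order-two orbifold point. The fix is available --- orbifold Chern--Weil, pulling back along a finite cover killing the isotropy, or simply citing the orbifold (Furuta--Steer/Nasatyr--Steer) version of the parabolic degree formula, which is in effect what the paper does --- but as written this step is missing, and it is the only place in the proposition where the orbifold structure of $Y$ actually matters.
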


\begin{proof}The vector bundle $f_*\mathcal{O}_{X}^{\rho_i}$ is a direct sum of $\dim \rho_i$-copies of the {\em parabolic bundle} canoni\-cal\-ly associated to the unitary representation $\rho_i$, viewed as a representation of $
\pi_1(Y\backslash \{Q_1,\ldots$, $Q_r\};b)$ factoring through the finite quotient~$G$~\cite{Mehta-Seshadri}. The parabolic structure is supported at the ramification points $Q_1, \ldots, Q_r$ and for each $j$ it is determined by the local monodromy~$\gamma_{R_j}$, as follows. Since $\rho_i(R_j)$ is a matrix of order $e_j$, its eigenvalues are of the form $e^{2\pi i k_w/e_j}$, $k_w\in \{0,\ldots, e_j-1\}$. The parabolic structure at $Q_j$ is given by the decreasing filtration of eigenspaces of~$\gamma_{R_j}$, the weights are the rational numbers~$k_w/e_j$, $0\leq k_w/e_j <1$, and each weight~$k_w$ has multiplicity $\dim \rho_i N_{k_w}(\rho_i;\gamma_{R_j})$. The formula for the degree of $f_*\mathcal{O}_{X}^{\rho_i}$ then just follows from the well-known formula for the degree of a parabolic bundle \cite[Corollary~1.10]{Mehta-Seshadri}.
\end{proof}

Finally we compute the isotropy term in $\chi(Y,\mathcal{V})$:

\begin{Proposition}\label{prop:isotropyV}
\begin{gather*}
\left( \sum_{i=1}^n \frac{\iota(\mathcal{V},P_i)}{p_i} \right)=\sum_{j=1}^n \sum_{k=1}^{p_j-1} N_k\big(e^{\frac{-2\pi i}{p_j}}\rho_i;\gamma_{P_j}\big)\frac{k}{p_j}.
\end{gather*}
\end{Proposition}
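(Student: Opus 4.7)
The statement is local at each orbifold point $P_j$ of $Y$, since $\iota(\mathcal{V},P_j)$ is determined entirely by the action of the local stabilizer generator $\gamma_{P_j}$ on the orbifold fiber $\mathcal{V}|_{P_j}$ (Definition~\ref{definition:isotropyTrace}). The strategy is to read off this action from the factorization $\mathcal{V}=\Omega^1_Y\otimes f_*\mathcal{O}_X^{\rho_i}$, one tensor factor at a time, and then apply the definition of the isotropy trace.

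Since $P_j$ is disjoint from the ramification locus by assumption, $f$ is \'etale at $P_j$ in the orbifold sense, and consequently the orbifold fiber $(f_*\mathcal{O}_X)|_{P_j}$ is canonically the regular representation $\mathbb{C}[G]$, with $\gamma_{P_j}$ acting through its image in $G$ under the monodromy map $\pi_1(Y\setminus\{Q_1,\ldots,Q_r\};b)\to G$. Isolating the $\rho_i$-isotypical summand $V_i\otimes V_i^*$, on which $\gamma_{P_j}$ acts as $\rho_i(\gamma_{P_j})\otimes\mathrm{Id}$, one concludes that the eigenvalue $e^{2\pi i k/p_j}$ of $\gamma_{P_j}$ on $(f_*\mathcal{O}_X^{\rho_i})|_{P_j}$ appears with multiplicity $(\dim\rho_i)\,N_k(\rho_i;\gamma_{P_j})$ for $k\in\{0,\ldots,p_j-1\}$.

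A local orbifold chart $\mathbb{D}/\mu_{p_j}$ at $P_j$ shows that $\gamma_{P_j}$ acts on the cotangent line $\Omega^1_Y|_{P_j}$ by $e^{-2\pi i/p_j}$, consistent with the contribution $(p_j-1)/p_j$ in \eqref{equation:canonicalDegree}. Tensoring the two actions and reindexing via the twisted representation $e^{-2\pi i/p_j}\rho_i$, the eigenvalue $e^{2\pi i\nu/p_j}$ on $\mathcal{V}|_{P_j}$ has multiplicity $(\dim\rho_i)\,N_\nu\bigl(e^{-2\pi i/p_j}\rho_i;\gamma_{P_j}\bigr)$, so Definition~\ref{definition:isotropyTrace} gives
\[
\iota(\mathcal{V},P_j) \;=\; (\dim\rho_i)\sum_{k=1}^{p_j-1} k\,N_k\bigl(e^{-2\pi i/p_j}\rho_i;\gamma_{P_j}\bigr),
\]
where the $\nu=0$ term drops out. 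Dividing by $p_j$ and summing over $j$ then yields the claim.

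The only step requiring real care is the identification of $(f_*\mathcal{O}_X)|_{P_j}$ with the regular representation of $G$ carrying the monodromy action by $\gamma_{P_j}$; everything else is routine bookkeeping of eigenvalues under tensor products. I would justify this step either by direct inspection in a local orbifold chart at $P_j$ (where $f$ presents as a disjoint union of \'etale covers of $\mathbb{D}/\mu_{p_j}$ permuted by $G$) or by citing the standard structure theory of \'etale covers of Deligne--Mumford curves (e.g., \cite{behrend-noohi}).
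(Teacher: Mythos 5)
Your argument follows the same route as the paper: the paper's proof consists precisely of the observation that $\gamma_{P_j}$ acts on the fiber of $\mathcal{V}=\Omega^1_Y\otimes f_*\mathcal{O}_{X}^{\rho_i}$ at $P_j$ through the twist $e^{-2\pi i/p_j}\rho_i$, followed by an appeal to Definition~\ref{definition:isotropyTrace}; you simply spell out why this holds (\'etaleness of $f$ over $P_j$, the regular-representation fiber of $f_*\mathcal{O}_X$ with its commuting monodromy action, and the weight of $\gamma_{P_j}$ on the cotangent line, which you correctly check against \eqref{equation:canonicalDegree}). One point worth flagging: your bookkeeping gives $\iota(\mathcal{V},P_j)=(\dim\rho_i)\sum_{k=1}^{p_j-1}k\,N_k\big(e^{-2\pi i/p_j}\rho_i;\gamma_{P_j}\big)$, i.e., the identity with an overall factor $\dim\rho_i$ that does not appear in the displayed statement. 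Your factor is the correct one: the $\rho_i$-isotypical fiber has dimension $(\dim\rho_i)^2$ by Proposition~\ref{prop:rankV}, so $\gamma_{P_j}$ acts on it as $\dim\rho_i$ copies of $e^{-2\pi i/p_j}\rho_i(\gamma_{P_j})$, and this factor is exactly what is needed, in parallel with the factor $\dim\rho_i$ in Proposition~\ref{prop:parabolicPiece}, so that dividing the assembled Euler characteristic identity by $\dim\rho_i$ yields Theorem~\ref{theorem:Chevalley--Weil} as stated. So there is no gap in your argument; the proposition as printed should be read with that factor (the discrepancy is invisible only when $\dim\rho_i=1$).
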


\begin{proof}Note that the action of $\gamma_{P_j}$ on the fiber of $\mathcal{V}=\Omega^1_Y\otimes f_*\mathcal{O}_{X}^{\rho_i}$ over the orbifold point $P_j$ is given by $e^{-2\pi i/p_j}\rho_i$. The identity then follows from Definition~\ref{definition:isotropyTrace}.
\end{proof}

Theorem \ref{theorem:Chevalley--Weil} now follows by putting together Propositions~\ref{prop:GActionOnH1Trivial}, \ref{prop:ramificationPiece}, \ref{prop:rankV}, \ref{prop:parabolicPiece} and \ref{prop:isotropyV}.

\section{Applications to modular curves}\label{section:modularCurves}

We now apply Theorem~\ref{theorem:Chevalley--Weil} to the special case when the base orbifold $Y$ is the compactifica\-tion~$X(1)$ of the orbifold quotient $\mathrm{PSL}_2(\mathbb{Z})\backslash\mathfrak{h}$ obtained by adding the cusp~$\infty$. The genus of this orbifold curve is zero, $\pi_1(X(1)-\infty;b) \simeq \mathrm{PSL}_2(\mathbb{Z})$ and $X(1)$ has $n = 2$ orbifold points $P_1 = [i]$, $P_2 = [e^{2\pi i /3}]$ of orders $p_1 = 2$, $p_2 = 3$, with oriented stabilizers generated by
\begin{gather*}
\gamma_{P_1} := S = \twobytwo{0}{-1}{1}{0}, \qquad \gamma_{P_2} := R^{-1} = \twobytwo{1}{1}{-1}{0}.
\end{gather*}
Any finite-index normal subgroup $\Gamma \triangleleft \mathrm{PSL}_2(\mathbb{Z})$ gives a Galois cover
\begin{gather*}
f(\Gamma)\colon \ X(\Gamma) \rightarrow X(1),
\end{gather*}
where $X(\Gamma)$ is the compactification of the quotient $\Gamma\backslash\mathfrak{h}$ obtained by adding finitely many cusps. The map $f(\Gamma)$ is of degree $d=[\Gamma: \mathrm{PSL}_2(\mathbb{Z})]$, and it is ramified only above the cusp $\infty$. The Galois group of the cover is therefore $ G = \mathrm{PSL}_2(\mathbb{Z})/\Gamma$. To compute the ramification data, we may choose the cusp $\infty$ of $X(\Gamma)$, which lies above the cusp $\infty$ of $X(1)$. The local monodromy at $\infty$ is the image of
\begin{gather*}
\gamma_{\infty} := T = \twobytwo{1}{1}{0}{1}
\end{gather*}
under the quotient map $\mathrm{PSL}_2(\mathbb{Z})\rightarrow G$. The ramification degree $e$ of $f(\Gamma)$ over $\infty$ is just the order of the image of $T$ in $G$ (the {\em ramification level} of $G\triangleleft \mathrm{PSL}_2(\mathbb{Z})$). There is a canonical isomorphism
\begin{align*}
S_2(\Gamma) &\longrightarrow H^0\big(X(\Gamma), \Omega^1_{X(\Gamma)}\big), \\
f &\longmapsto f d\tau,
\end{align*}
so that the canonical representation $\rho_{\Gamma}:= \rho_{f(\Gamma)}$ \eqref{eq:canonicalRep} goes over to the representation $\mathrm{PSL}_2(\mathbb{Z})\rightarrow \mathrm{GL}(S_2(\Gamma))$ given by
\begin{gather*}
\rho_{\Gamma}(\gamma) f = f|_2 \gamma = f\left( \frac{ a\tau + b}{c\tau + d}\right)(c\tau + d)^{-2}, \qquad \gamma = \twobytwo{a}{b}{c}{d}\in \mathrm{PSL}_2(\mathbb{Z}),
\end{gather*}
factoring through the finite group $G$. Note that if $\rho_{\Gamma}(\gamma) f = f$ for all $\gamma \in \mathrm{PSL}_2(\mathbb{Z})$ then $f\in S_2(\mathrm{PSL}_2(\mathbb{Z})) = 0$, therefore the trivial representation can never occur inside $\rho_{\Gamma}$. For the remaining non-trivial representations of $G$ Theorem~\ref{theorem:Chevalley--Weil} simplifies to

\begin{Theorem}[Chevalley--Weil formula for $\Gamma\triangleleft \mathrm{PSL}_2(\mathbb{Z})$]\label{theorem:Chevalley--WeilModForms}
Let $\Gamma\triangleleft \mathrm{PSL}_2(\mathbb{Z})$ be a finite-index normal subgroup, and let $G:=\mathrm{PSL}_2(\mathbb{Z})/\Gamma$. Let $\rho\neq 1$ be an irreducible representation of $G$. Then the multiplicity of $\rho$ in $\rho_{\Gamma}$ is given by
\begin{gather*}
d = -\frac{5}{12}\dim \rho + \sum_{k=1}^{e-1} N_k(\rho;T)\left(1 - \frac{k}{e}\right) - \frac{\operatorname{Tr}(\rho(S))}{4} + \frac{\zeta^2_3\operatorname{Tr}\big(\rho\big(R^{-1}\big)\big)}{3(1-\zeta_3^{2})} + \frac{\zeta_3\operatorname{Tr}(\rho(R))}{3(1-\zeta_3)},\end{gather*}
where $\zeta_3:= e^{2\pi i/3}$.
\end{Theorem}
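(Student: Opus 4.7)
The plan is to specialize Theorem~\ref{theorem:Chevalley--Weil} to the cover $f(\Gamma)\colon X(\Gamma)\rightarrow X(1)$, using the explicit data $g_Y=0$, $n=2$, $(p_1,p_2)=(2,3)$, $(\gamma_{P_1},\gamma_{P_2})=(S,R^{-1})$, together with a single ramification point ($r=1$) at the cusp $\infty$ with local monodromy $\gamma_{R_1}=T$ of order $e$. Since $\rho\neq 1$ by hypothesis, $\epsilon=0$, so the formula of Theorem~\ref{theorem:Chevalley--Weil} reduces to the sum of four pieces: the orbifold Euler contribution $\dim\rho\cdot(g_Y-1+n-\sum 1/p_j)$, the cusp/parabolic sum $\sum_{k=1}^{e-1}N_k(\rho;T)(1-k/e)$, and the two orbifold isotropy corrections at $P_1$ and $P_2$. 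The first evaluates immediately to $\dim\rho\cdot(0-1+2-\tfrac12-\tfrac13)=\dim\rho/6$, and the second already appears in the desired form.

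The core computation is the conversion of each isotropy sum $-\sum_{k=1}^{p_j-1}N_k\bigl(e^{-2\pi i/p_j}\rho;\gamma_{P_j}\bigr)\tfrac{k}{p_j}$ into a trace expression in $\rho(\gamma_{P_j})$. For $P_1$: since $S^2=1$, $\rho(S)$ has eigenvalues $\pm 1$, and unwinding the definition of $N_1(-\rho;S)$ (using that multiplication by $-1=e^{-\pi i}$ shifts $k\mapsto k+1$) identifies it with the multiplicity of the $+1$-eigenvalue of $\rho(S)$, namely $(\dim\rho+\operatorname{Tr}\rho(S))/2$; this piece therefore contributes $-\dim\rho/4-\operatorname{Tr}\rho(S)/4$. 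For $P_2$: the matrix $\rho(R^{-1})$ has eigenvalues in $\{1,\zeta_3,\zeta_3^2\}$ with multiplicities $b_0,b_1,b_2$, and the index shift induced by the twist $e^{-2\pi i/3}$ gives $N_1(e^{-2\pi i/3}\rho;R^{-1})=b_2$ and $N_2(e^{-2\pi i/3}\rho;R^{-1})=b_0$, so the $P_2$ sum collapses to $-(2b_0+b_2)/3$. Applying discrete Fourier inversion on the cyclic group $\langle R\rangle$ expresses $b_0$ and $b_2$ as explicit $\mathbb{C}$-linear combinations of $\dim\rho$, $\operatorname{Tr}\rho(R^{-1})$, and $\operatorname{Tr}\rho(R)$.

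The remaining task is to match the resulting trace coefficients with the prescribed factors $\zeta_3^2/(3(1-\zeta_3^2))$ and $\zeta_3/(3(1-\zeta_3))$. This reduces to the elementary identities $(1-\zeta_3)(1-\zeta_3^2)=3$ and $1+\zeta_3+\zeta_3^2=0$, which rewrite $\zeta_3/(3(1-\zeta_3))$ as $(\zeta_3-1)/9$ (and similarly for the conjugate), after which a direct algebraic check confirms both coefficients. Collecting all four pieces, the accumulated coefficient of $\dim\rho$ is $\tfrac{1}{6}-\tfrac{1}{4}-\tfrac{1}{3}=-\tfrac{5}{12}$, yielding the displayed formula. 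The whole argument is a bookkeeping specialization of Theorem~\ref{theorem:Chevalley--Weil}; the only genuine subtlety, and hence the main obstacle, is tracking the index shifts caused by the scalar twists $e^{-2\pi i/p_j}$ inside the $N_k$'s and carrying out the cube-root-of-unity algebra at $P_2$ without sign errors.
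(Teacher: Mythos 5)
Your proposal is correct and matches the paper's own proof: both specialize Theorem~\ref{theorem:Chevalley--Weil} to the cover $X(\Gamma)\to X(1)$ with $g_Y=0$, $n=2$, $(p_1,p_2)=(2,3)$, a single ramification point at $\infty$ with monodromy $T$, and then rewrite the eigenvalue-multiplicity sums at $S$ and $R^{-1}$ as trace expressions. Your explicit bookkeeping of the index shifts from the twists $e^{-2\pi i/p_j}$ and the discrete Fourier inversion at $R$ simply spells out the identities the paper states directly, so the two arguments are essentially identical.
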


\begin{proof}Using the notation of Theorem~\ref{theorem:Chevalley--Weil}, note first that $\epsilon = 0$ always since we are assuming $\rho \neq 1$. We also have that $g_{X(1)}=0$, $n=2$, $p_1 = 2$, $p_2 = 3$, which gives a contribution of $\dim \rho/6$ in Theorem~\ref{theorem:Chevalley--WeilModForms}. As stated above, there is only one ramification point $\infty \in X(1)$ of index $e$ and with local monodromy given by the image of $T$ in $G$, which gives the second term. As for the remaining terms, since the order of $\rho(S)$ and $\rho(R^{-1})$ is low it is more convenient computationally to express the eigenvalue multiplicities in terms of traces. In particular, note that
\begin{gather*}
\operatorname{Tr}(\rho(S)) = N_0(\rho(S)) - N_1(\rho(S)) = N_1(-\rho(S)) - N_0(-\rho(S)) = 2N_1(-\rho(S)) - \dim \rho
\end{gather*}
and similarly
\begin{gather*}
\frac{N_1\big(e^{-2\pi i /3}\rho;R^{-1}\big)}{3} + 2\cdot\frac{N_2\big(e^{-2\pi i /3}\rho;R^{-1}\big)}{3} = \frac{\dim \rho}{3}-\frac{\zeta^2_3\operatorname{Tr}\big(\rho\big(R^{-1}\big)\big)}{3\big(1-\zeta_3^{2}\big)} - \frac{\zeta_3\operatorname{Tr}(\rho(R))}{3(1-\zeta_3)},
\end{gather*}
yielding the simplified formula.
\end{proof}

The formula of Theorem~\ref{theorem:Chevalley--WeilModForms} severely restricts which irreducible representations of $\mathrm{PSL}_2(\mathbb{Z})$ may occur inside a canonical representation. For example of the six characters of $\mathrm{PSL}_2(\mathbb{Z})$ only one may occur, namely the character $\chi$ defined by $\chi(T) = e^{2\pi i /6}$. This character must therefore be the canonical representation $\rho_{\Gamma}$ corresponding to {\em any} genus one normal subgroup $\Gamma\triangleleft\mathrm{PSL}_2(\mathbb{Z})$ (there are infinitely many of them).

Of the 27 two-dimensional irreducible representations of $\mathrm{PSL}_2(\mathbb{Z})$ which factor through a finite group (these have been classified in \cite{Mason2}) only four may appear in a canonical representation, namely the ones with $\rho(T)$ equivalent to
\begin{gather}\label{eqn:dim2IrrCanonical}
\twobytwo{\zeta_{12}}{0}{0}{\zeta_{12}^5}, \qquad \twobytwo{\zeta_8}{0}{0}{\zeta_8^3}, \qquad
\twobytwo{\zeta_{20}}{0}{0}{\zeta^9_{20}}, \qquad
\twobytwo{\zeta^3_{20}}{0}{0}{\zeta^7_{20}},
\end{gather}
where $\zeta_n:= e^{2\pi i /n}$. For any one of the four representations $\rho$ above, we can use GAP and a list of normal subgroups of $\mathrm{PSL}_2(\mathbb{Z})$ to identify the normal subgroup $\Gamma$ of smallest index such that $\rho$ is a factor of $\rho_{\Gamma}$. The first representation appears as a factor inside the canonical representation of the unique genus three subgroup of index 48, the second one is the canonical representation corresponding to the unique genus two subgroup (also of index~48), the third one appears as a~factor inside the canonical representation of the unique genus 15 subgroup of index 240 and the last representation appears as a~factor inside the canonical representation of the unique genus~55 subgroup of index~720.

A similar analysis could in principle be applied to higher-dimensional irreducible representations of $\mathrm{PSL}_2(\mathbb{Z})$ factoring through a finite group. If a given irreducible representation $\rho$ satisfies $d(\rho)>0$ in Theorem~\ref{theorem:Chevalley--WeilModForms}, what is the minimal index of the normal subgroup $\Gamma\triangleleft\mathrm{PSL}_2(\mathbb{Z})$ such that $\rho$ is a factor of $\rho_{\Gamma}$?

\begin{Remark}
Let $\{f_1, \ldots, f_g\}$ be a basis for $S_2(\Gamma)$, $\Gamma\triangleleft\mathrm{PSL}_2(\mathbb{Z})$ of finite-index and genus $g\geq 1$. Then $F := (f_1, \ldots, f_g)$ is a (non-zero) $\rho_{\Gamma}$-valued cusp form of weight two \cite{Candelori-Marks}. Since $\rho_{\Gamma}$ is unitarizable, this means that its {\em minimal weight} should be equal to two \cite{CandeloriFranc}.
\end{Remark}

\begin{Example}
Let $\Gamma(8)\triangleleft\mathrm{SL}_2(\mathbb{Z})$ be the principal congruence subgroup of level 8, and let $\Gamma$ be its image inside $\mathrm{PSL}_2(\mathbb{Z})$. The quotient $G = \mathrm{PSL}_2(\mathbb{Z})/\Gamma$ is a finite group of order 192, isomorphic to $\mathrm{SL}_2(\mathbb{Z}/8\mathbb{Z})$. Its character table is easily computable using GAP, and for each irreducible character we may compute its multiplicity inside $\rho_{\Gamma}$ using Theorem~\ref{theorem:Chevalley--WeilModForms}. We obtain the decomposition
\begin{gather*}
\rho_{\Gamma} \simeq \rho_1\oplus \rho_2,
\end{gather*}
where $\rho_1$ is irreducible of dimension two and $\rho_2$ is irreducible of dimension three. According to \cite[Proposition~2.5]{TubaWenzl}, such irreducible representations of $\mathrm{PSL}_2(\mathbb{Z})$ are entirely determined by their $\rho_i(T)$-eigenvalues. These can easily be computed again from the character table of~$G$. Using the formulas of \cite[Proposition~2.5]{TubaWenzl} we obtain the following model for $\rho_{\Gamma}$:
\begin{gather*}
\rho_{\Gamma}(T) = \left(\begin{matrix}
\zeta_8 & \zeta_8 & 0 & 0 & 0 \\
0 & \zeta_8^3 & 0 & 0 & 0 \\
0 & 0 & \zeta_8 & i-1 & i \\
0 & 0 & 0 & i & i \\
0 & 0 & 0 & 0 & \zeta^5_8
\end{matrix} \right), \qquad \rho_{\Gamma}(S) = \left(\begin{matrix}
0& \zeta^5_8 & 0 & 0 & 0 \\
\zeta_8^3 & 0 & 0 & 0 & 0 \\
0 & 0 & 0 & 0 & 1 \\
0 & 0 & 0 & -1 & 0 \\
0 & 0 & 1 & 0 & 0
\end{matrix} \right),
\end{gather*}
where $\zeta_8:= e^{2\pi i /8}$. Note in particular that $\rho_1$ is equivalent to the second representation in~\eqref{eqn:dim2IrrCanonical}.
\end{Example}

\section[The canonical representation of modular curves $X(\Gamma(p))$]{The canonical representation of modular curves $\boldsymbol{X(\Gamma(p))}$}\label{section:principalCongruenceSubgroups}

We now apply Theorem~\ref{theorem:Chevalley--WeilModForms} to a well-known family of normal subgroups of $\mathrm{PSL}_2(\mathbb{Z})$. Let $p$ be a prime and let $\Gamma(p)\triangleleft \mathrm{SL}_2(\mathbb{Z})$ be the principal congruence subgroup of level~$p$. For $p\geq 3$ we have and isomorphism $G \simeq \mathrm{PSL}_2(\mathbb{F}_p)$. The map $X(\Gamma(p)) \longrightarrow X(1)$ is a Galois cover of degree $|G| = \frac{(p-1)p(p+1)}{2}$, ramified above $\infty \in X(1)$. The local monodromy at $\infty \in X(\Gamma(p))$ is generated by $T$, whose image in $G$ has order $p$. Therefore the ramification degree is $e=p$.

By Theorem~\ref{theorem:Chevalley--WeilModForms} we can use the character table of $\mathrm{PSL}_2(\mathbb{F}_p)$ to decompose the representation
\begin{gather*}
\rho_{\Gamma(p)}\colon \ \mathrm{PSL}_2(\mathbb{Z}) \longrightarrow \mathrm{GL}(S_2(\Gamma(p)))
\end{gather*}
into irreducible representations. Note that $S_2(\Gamma(3))=0$, so assume $p\geq 5$. In this case the images of $R$ and $R^{-1}$ inside $G$ are conjugate, therefore $\operatorname{Tr}(\rho(R)) = \operatorname{Tr}(\rho(R^{-1}))$ for any representation $\rho$ of $G$. The formula of Theorem~\ref{theorem:Chevalley--WeilModForms} then simplifies to
\begin{gather}\label{equation:ChevalleyWeilCongruence}
d = -\frac{5}{12}\dim \rho + \sum_{k=1}^{p-1} N_k(\rho;T)\left(1 - \frac{k}{p}\right) - \frac{\operatorname{Tr}(\rho(S))}{4} - \frac{\operatorname{Tr}(\rho(R))}{3}.
\end{gather}

The character table of $\mathrm{PSL}_2(\mathbb{F}_p)$, $p\geq 5$ was originally computed by Frobenius and Schur (a~modern account can be found in \cite[Section~8]{GroupReps}) and it depends on whether $p\equiv 1,3 \,(4)$. We therefore break down the computation into these two cases. For ease of notation, we let $u_p$ be a generator for $\mathbb{F}_p^{\times}$ and we let $u_{p^2}$ be a~generator for~$\mathbb{F}_{p^2}^{\times}$. We also let $\varphi \in G$ be the linear transformation of~$\mathbb{F}_{p^2}$ sending~$x\mapsto u_{p^2}^{p-1}x$.

\subsection[Case $p\equiv 1 \, (4)$]{Case $\boldsymbol{p\equiv 1 \, (4)}$}

In this case
\begin{gather*}
S \sim \twobytwo{u_p^{\frac{p-1}{4}}}{0}{0}{u_p^{-\frac{p-1}{4}}}, \qquad R \sim \begin{cases} \twobytwo{u_p^{\frac{p-1}{3}}}{0}{0}{u_p^{-\frac{p-1}{3}}} &\text{if } p\equiv 1\, (3), \\
\\
\varphi^{\frac{p+1}{3}} &\text{if } p\equiv 2\, (3),
\end{cases}
\end{gather*}
and from the character table of \cite[Theorem~8.9]{GroupReps} we obtain the following table, displaying the traces of each irreducible character of $G$ evaluated at the matrices $I_2$, $S$, $R$, $T$ and $ \smalltwobytwo{1}{0}{u_p}{1}$:
\begin{gather}\label{table:p14}
\begin{array}{@{}c|c|c|c| c|c|c}
&\dim &S &R\;\text{ if } p\equiv 1\, (3) & R\;\text{ if } p\equiv 2\, (3) & T & \smalltwobytwo{1}{0}{u_p}{1} \\
\hline
\lambda &p&1&1&-1 & 0 & 0\\
\{\mu_s\}_{1\leq s \leq \frac{p-5}{4}} & p+1 & 2\cdot(-1)^s & \epsilon_3(s) & 0& 1 & 1 \\
\{\theta_t\}_{1\leq t \leq \frac{p-1}{4}} & p-1 & 0 & 0 & -\epsilon_3(t)& -1 & -1\\
\chi_1 & \frac{p+1}{2} & (-1)^{\frac{p-1}{4}} & 1 & 0 & \frac{1 + \sqrt{p}}{2} & \frac{1 - \sqrt{p}}{2}\\
\chi_2 &\frac{p+1}{2} & (-1)^{\frac{p-1}{4}} & 1 & 0&\frac{1 - \sqrt{p}}{2} & \frac{1 + \sqrt{p}}{2}\\
\end{array}
\end{gather}
where
\begin{gather*}
\epsilon_3(n) = \begin{cases} \hphantom{-}2 &\text{if } n\equiv 0\,(3), \\
 -1 &\text{if } n\equiv 1,2\,(3),
\end{cases}
\end{gather*}
and the irreducible characters $\lambda$, $\mu_s$, $\theta_t$, $\chi_1$, $\chi_2$ are labelled as in \cite[Theorem~8.9]{GroupReps}. The table contains all the data required to compute formula~\eqref{equation:ChevalleyWeilCongruence}, including the ramification data at~$\infty$, which is derived below in Proposition~\ref{prop:TRamification}. In order to state this computation, for any odd prime~$p$ and any integer~$a$ such that $\gcd(a,p)=1$ let
\begin{gather*}
\left(\frac{a}{p}\right)_L =
 \begin{cases} \hphantom{-}1 & \text{if $a$ is a square mod $p$}, \\
						 -1 & \text{if $a$ is a not a square mod $p$} \end{cases}
\end{gather*}
be the Legendre symbol.

\begin{Proposition}\label{prop:TRamification}For each one of the irreducible representations appearing in Table~{\rm \ref{table:p14}}, we have
\begin{gather*}
N_k(\lambda; T) = 1, \qquad N_k(\mu_s; T) = 1, \qquad N_k(\theta_t; T) = 1, \\
 N_k(\chi_1; T) = \begin{cases} 1 &\text{if } \left(\dfrac{k}{p}\right)_L =1, \vspace{1mm}\\
 0 &\text{if } \left(\dfrac{k}{p}\right)_L = -1,
\end{cases} \qquad N_k(\chi_2; T) = \begin{cases} 0 &\text{if } \left(\dfrac{k}{p}\right)_L =1, \vspace{1mm}\\
 1 &\text{if } \left(\dfrac{k}{p}\right)_L = -1,
\end{cases}
\end{gather*}
where $k=1,\ldots,p-1$.
\end{Proposition}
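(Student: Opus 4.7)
The plan is to compute $N_k(\rho; T)$ via the standard character-theoretic identity
\begin{gather*}
N_k(\rho; T) = \frac{1}{p}\sum_{j=0}^{p-1} e^{-2\pi i jk/p}\,\operatorname{Tr}(\rho(T^j)),
\end{gather*}
which reduces everything to computing the traces $\operatorname{Tr}(\rho(T^j))$ and then summing a weighted collection of $p$-th roots of unity. This identity is immediate from the fact that $\rho(T)$ has order dividing $p$, so its eigenvalues are $p$-th roots of unity and can be separated by characters of the cyclic group $\langle T\rangle$.

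The first step is to identify the conjugacy class of $T^j$ in $G=\mathrm{PSL}_2(\mathbb{F}_p)$ for each $j\neq 0$. A direct computation gives $\smalltwobytwo{a}{0}{0}{a^{-1}}T^j\smalltwobytwo{a^{-1}}{0}{0}{a} = T^{a^2 j}$, and a further conjugation by $S$ shows that $\smalltwobytwo{1}{0}{u_p}{1}$ is conjugate in $G$ to $T^{-u_p}$. Since $p\equiv 1\,(4)$ implies $-1$ is a square mod $p$, the element $-u_p$ is a non-residue; hence the $T^j$ ($j\neq 0$) split into exactly two conjugacy classes according to the value of $\left(\frac{j}{p}\right)_L$, with representatives $T$ and $\smalltwobytwo{1}{0}{u_p}{1}$. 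Their character values are precisely the last two columns of Table~\ref{table:p14}.

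Setting $a = \operatorname{Tr}(\rho(T))$ and $b = \operatorname{Tr}(\rho(\smalltwobytwo{1}{0}{u_p}{1}))$, the formula above becomes
\begin{gather*}
N_k(\rho; T) = \frac{1}{p}\bigl( \dim\rho + a\, S_k + b\, S_k'\bigr),
\end{gather*}
where $S_k$ (resp.~$S_k'$) is the sum of $e^{-2\pi i jk/p}$ over quadratic residues (resp.~non-residues) mod $p$. The pair $(S_k, S_k')$ is pinned down by two relations: $S_k + S_k' = -1$ (since the nontrivial $p$-th roots of unity sum to $-1$), and $S_k - S_k' = \left(\frac{k}{p}\right)_L\sqrt{p}$ by the classical evaluation of the quadratic Gauss sum in the case $p\equiv 1\,(4)$. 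The sign determination in this Gauss sum is the one genuinely nontrivial input; everything else is bookkeeping.

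Substituting the values of $a$ and $b$ from Table~\ref{table:p14} then finishes the proof. For $\lambda$, $\mu_s$, $\theta_t$ one has $a=b$ (respectively $0$, $1$, $-1$), so the Gauss-sum piece $a S_k + b S_k' = a(S_k+S_k') = -a$ contains no Legendre symbol, and the formula collapses to $N_k = (\dim\rho - a)/p$, which equals $1$ in all three cases. For $\chi_1$ and $\chi_2$, $a$ and $b$ differ by $\sqrt{p}$, so the combination $aS_k + bS_k'$ pairs the difference with $S_k - S_k' = \left(\frac{k}{p}\right)_L\sqrt{p}$; a short simplification yields $N_k(\chi_1; T) = \frac{1}{2}\bigl(1 + \left(\frac{k}{p}\right)_L\bigr)$ and $N_k(\chi_2; T) = \frac{1}{2}\bigl(1 - \left(\frac{k}{p}\right)_L\bigr)$, exactly as claimed.
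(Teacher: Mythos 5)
Your proof is correct, but it takes a different route from the paper's. The paper determines the full characteristic polynomial $P(\rho;T)$ from the traces of $\rho(T^j)$ via Newton's identities and then pins down its factorization by rationality arguments: for $\lambda$, $\mu_s$, $\theta_t$ the polynomial is $\mathbb{Q}$-rational, so $\gcd(P(\rho;T),x^p-1)$ must be $x-1$, $\Phi_p$ or $x^p-1$ by the irreducibility of $\Phi_p$ over $\mathbb{Q}$, and the trace rules out the degenerate cases; for $\chi_1$, $\chi_2$ it uses the factorization $\Phi_p=\Phi_p^{(r)}\Phi_p^{(n)}$ over $\mathbb{Q}(\sqrt{p})$ and decides between the two factors by comparing $\operatorname{Tr}(\chi_i(T))$ with the Gauss sum $\tfrac{1}{2}(-1\pm\sqrt{p})$. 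You instead extract each multiplicity directly by the orthogonality relation on the cyclic group $\langle T\rangle$, which reduces everything to the two sums $S_k+S_k'=-1$ and $S_k-S_k'=\left(\frac{k}{p}\right)_L\sqrt{p}$; both arguments therefore rest on the same nontrivial input, the signed evaluation of the quadratic Gauss sum for $p\equiv 1\,(4)$. Your approach is more uniform (one formula covers all five families, with no case analysis on possible rational factors and no Newton's identities), and you also justify the splitting of $\{T^j\}$ into the two unipotent classes according to $\left(\frac{j}{p}\right)_L$ (via conjugation by $\mathrm{diag}\big(a,a^{-1}\big)$ and by $S$, using that $-1$ is a square mod $p$), a fact the paper asserts without proof; what the paper's method buys in exchange is the explicit characteristic polynomial itself and a structural explanation, via $\mathbb{Q}$-rationality, of why the multiplicities for $\lambda$, $\mu_s$, $\theta_t$ are forced to be constant in $k$.
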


\begin{proof}For any representation $\rho$ of $G$, let $P(\rho;T)$ be the characteristic polynomial of $\rho(T)$. Since $\rho(T)^p = I_{\dim \rho}$, we know that the roots of $P(\rho;T)$ are necessarily $p$-th roots of unity. The polynomial itself can be computed from Table \ref{table:p14} by first computing the traces of $\rho(T^k)$, $k=0,\ldots, p-1$ and then compute from these traces the coefficients of $P(\rho;T)$ using Newton's identities. Now the subgroup $\big\{T^k\big\}$ breaks up into two conjugacy classes:
\begin{gather*}
\big\{T^k\big\} \cap [T] = \big\{ \smalltwobytwo{1}{r}{0}{1}\colon r \text{ is a square in } \mathbb{F}_p\big\}, \\
\big\{T^k\big\} \cap \big[\smalltwobytwo{1}{0}{u_p}{1}\big] = \big\{ \smalltwobytwo{1}{n}{0}{1}\colon n \text{ is not a square in } \mathbb{F}_p\big\},
\end{gather*}
therefore the coefficients of $P(\rho;T)$ are polynomials in the last two columns of Table \ref{table:p14}. For $\rho = \lambda, \mu_s$ and $\theta_t$, it follows that $P(\rho;T)$ has coefficients in $\mathbb{Q}$, and therefore
\begin{gather*}
\gcd\big(P(\rho;T), x^p-1\big) = \begin{cases} x-1 \\
 \Phi_p \\
 x^p-1 \end{cases} \in \mathbb{Q}[x],
\end{gather*}
where $\Phi_p$ is the $p$-th cyclotomic polynomial, which is irreducible over $\mathbb{Q}$. Suppose first $\rho=\lambda$. If $\gcd(P(\lambda;T),x^p-1) = (x-1)$ then necessarily $\lambda(T)= I_p$, which is impossible since Table \ref{table:p14} gives $\operatorname{Tr}\lambda(T) = 0 \neq p$. If $\gcd(P(\lambda;T),x^p-1) = \Phi_p$, then $P(\lambda;T)$ factors over $\mathbb{Q}$ as $\Phi_p \cdot(x - \zeta_p)$, for some $p$-th root of unity $\zeta_p$. But since $p\geq 5$, the only such polynomial $(x - \zeta_p)$ defined over~$\mathbb{Q}$ is~$(x-1)$, therefore $P(\lambda;T) = x^p-1$ necessarily, which gives the formula
\begin{gather*}
N_k(\lambda; T) = 1, \qquad k=1, \ldots, p-1.
\end{gather*}
For $\rho=\mu_s$, again we cannot have $\gcd(P(\mu_s;T),x^p-1) = (x-1)$ since none of the $\mu_s(T)$ is the identity, the trace being $1\neq 0$. If $\gcd(P(\mu_s;T),x^p-1) = \Phi_p$ then $P(\mu_s;T)$ factors as $\Phi_p\cdot q(x)$, for some quadratic polynomial $q(x)$. The only $\mathbb{Q}$-rational quadratic polynomials having only $p$-th roots as their solutions are $(x-1)^2$ and $\Phi_3$. Since $p\geq 5$, the latter cannot occur and $P(\mu_s;T) = \Phi_p\cdot(x-1)^2$. A similar reasoning shows that $P(\theta_t;T) = \Phi_p$, so the first row of Proposition~\ref{prop:TRamification} is proved. It remains to compute $P(\rho;T)$ for $\rho=\chi_1,\chi_2$. Note that for these two representations the last two columns of Table~\ref{table:p14} have coefficients in $\mathbb{Q}(\sqrt{p})$ and therefore $P(\rho;T)$ is $\mathbb{Q}(\sqrt{p})$-rational. Now the $p$-th cyclotomic polynomial factors over $\mathbb{Q}(\sqrt{p})$ as
\begin{gather*}\Phi_p = \Phi^{(r)}_p\cdot \Phi^{(n)}_p, \qquad \in \mathbb{Q}(\sqrt{p})[x],\end{gather*}
where $\Phi^{(r)}_p$ (resp.~$\Phi^{(n)}_p$) is the irreducible monic polynomial of degree $(p-1)/2$ over $\mathbb{Q}(\sqrt{p})$ whose roots are the $p$-th roots of unity of the form $e^{2\pi i r/p}$ (resp. $e^{2\pi i n/p}$), with $r$ (resp. $n$) a quadratic residue mod $p$ (resp. quadratic non-residue mod $p$). Let now $\rho=\chi_1$. Then $\gcd(P(\chi_1;T), x^p-1)$ is a non-constant $\mathbb{Q}(\sqrt{p})$-rational polynomial and it cannot be $(x-1)$ for otherwise $\chi(T)$ would be the identity, which is impossible since $\operatorname{Tr}(\chi_1(T)) \neq p$. Since $\deg P(\chi_1;T) = (p+1)/2$, the only other possibilities are that $\gcd(P(\chi_1;T), x^p-1) = \Phi^{(r)}_p$ or $\Phi^{(n)}_p $, so that $P(\chi_1;T) = (x-1)\Phi^{(r)}_p$ or $(x-1)\Phi^{(n)}_p$. To determine which one it is, note that the sum of the roots of $(x-1)\Phi^{(r)}_p$ can be computed by a Gauss sum and it is equal to $1/2(-1-\sqrt{p})$. This must coincide with the negative of $\operatorname{Tr}(\chi_1(T))$, so by Table~\ref{table:p14} we see that this is indeed the correct factorization. Similarly we may deduce that $P(\chi_2;T) = (x-1)\Phi^{(n)}_p$, which concludes the proof.
 \end{proof}

Putting everything into formula \eqref{equation:ChevalleyWeilCongruence} we get:

\begin{Theorem}\label{Thm:CWFormulaForP14} Let $p\geq 5$ be a prime, $p\equiv 1\,(4)$. Then the multiplicities of the irreducible representations of $\mathrm{PSL}_2(\mathbb{F}_p)$ inside the canonical representation $\rho_{\Gamma(p)}$ are given by
\begin{gather*}
d(\lambda) = \frac{p-9}{12} - \frac{1}{3}\left(\frac{p}{3}\right)_L, \\
d(\mu_s) = \frac{p-11}{12} -\frac{(-1)^s}{2} - \left(1+\left(\frac{p}{3}\right)_L\right) \frac{\epsilon_3(s)}{6}, \qquad 1\leq s \leq \frac{p-5}{4}, \\
d(\theta_t) = \frac{p-1}{12} + \left(1-\left(\frac{p}{3}\right)_L\right) \frac{\epsilon_3(t)}{6}, \qquad 1\leq t \leq \frac{p-1}{4}, \\
d(\chi_1) = \frac{p-11}{24} - \frac{(-1)^{\frac{p-1}{4}}}{4} - \frac{1}{6}\left(1+\left(\frac{p}{3}\right)_L\right), \\
d(\chi_2) = \frac{p-11}{24} - \frac{(-1)^{\frac{p-1}{4}}}{4} - \frac{1}{6}\left(1+\left(\frac{p}{3}\right)_L\right).
\end{gather*}
\end{Theorem}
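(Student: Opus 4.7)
The plan is to apply the simplified Chevalley--Weil formula \eqref{equation:ChevalleyWeilCongruence} to each of the five families of irreducible characters $\lambda$, $\mu_s$, $\theta_t$, $\chi_1$, $\chi_2$ listed in Table~\ref{table:p14}, drawing the data $\dim\rho$, $\operatorname{Tr}(\rho(S))$, $\operatorname{Tr}(\rho(R))$ directly from that table and the ramification multiplicities $N_k(\rho;T)$ from Proposition~\ref{prop:TRamification}. Since the closed form in the statement packages the mod $3$ dependence into the Legendre symbol $\bigl(\tfrac{p}{3}\bigr)_L$, I will carry out each computation without case-splitting on the value of $p\pmod 3$ until the final step, then verify that the two cases $p\equiv 1,2\,(3)$ agree with the unified expression via the identities $1+\bigl(\tfrac{p}{3}\bigr)_L \in\{2,0\}$ and $1-\bigl(\tfrac{p}{3}\bigr)_L\in\{0,2\}$.

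The first key step is to evaluate the ramification sum $\sum_{k=1}^{p-1}N_k(\rho;T)(1-k/p)$ for each family. For $\rho\in\{\lambda,\mu_s,\theta_t\}$, Proposition~\ref{prop:TRamification} gives $N_k(\rho;T)=1$ for all $k=1,\ldots,p-1$, and a routine evaluation using $\sum_{k=1}^{p-1}k=p(p-1)/2$ yields the value $(p-1)/2$. For $\chi_1$ and $\chi_2$, Proposition~\ref{prop:TRamification} restricts the sum to quadratic residues or non-residues modulo $p$, so I need the sum of quadratic residues mod~$p$. Here the hypothesis $p\equiv 1\,(4)$ is essential: since $-1$ is a square modulo $p$, the quadratic residues pair up as $\{k,p-k\}$, each pair summing to $p$, giving $\sum_{k\in\mathrm{QR}}k = p(p-1)/4$ and likewise for non-residues. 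In both cases the ramification sum collapses to $(p-1)/4$.

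The second step is purely algebraic substitution. Plugging $\dim\rho$, $\operatorname{Tr}(\rho(S))$, $\operatorname{Tr}(\rho(R))$, and the ramification sum into~\eqref{equation:ChevalleyWeilCongruence} produces, for each representation, an affine-linear expression in $p$ whose constant part depends on the auxiliary data $(-1)^s$, $(-1)^{(p-1)/4}$, or $\epsilon_3(\cdot)$, and on the residue of $p$ modulo $3$ through $\operatorname{Tr}(\rho(R))$. The latter dependence is then absorbed into the Legendre factor as noted above, giving the closed forms in the statement. A sanity check is that for $\rho\in\{\chi_1,\chi_2\}$ the terms coming from $\operatorname{Tr}(\rho(R))$ are identical (both are $1$ if $p\equiv 1\,(3)$, $0$ otherwise), which is consistent with the two lines $d(\chi_1)=d(\chi_2)$ in the statement.

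The main obstacle is not conceptual but combinatorial: the clean expression in the statement is obtained only after carefully matching signs in the $\mu_s$ and $\theta_t$ rows, where the $(-1)^s$ in $\operatorname{Tr}(\rho(S))$ and the $\epsilon_3$-factor in $\operatorname{Tr}(\rho(R))$ must be tracked simultaneously and combined with the mod $3$ case distinction. A secondary check needed is that the ramification argument for $\chi_1$ vs.\ $\chi_2$ correctly assigns the eigenvalue $1$ to the ``extra'' slot: this follows from $P(\chi_i;T)=(x-1)\Phi_p^{(\cdot)}$ as established in the proof of Proposition~\ref{prop:TRamification}, so that $N_0(\chi_i;T)=1$ and the remaining $(p-1)/2$ eigenvalues distribute over the appropriate set of primitive $p$-th roots of unity, confirming that the formula applies with the sum ranging over $k=1,\ldots,p-1$.
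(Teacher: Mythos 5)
Your proposal is correct and follows essentially the same route as the paper: the paper's proof is likewise a direct substitution of the data of Table~\ref{table:p14} and Proposition~\ref{prop:TRamification} into formula~\eqref{equation:ChevalleyWeilCongruence}, using the identity $\sum_{(\frac{k}{p})_L=1} k/p = (p-1)/4 = \sum_{(\frac{k}{p})_L=-1} k/p$ for $p\equiv 1\,(4)$ to handle $\chi_1$ and $\chi_2$. Your pairing argument $\{k,p-k\}$ (valid because $-1$ is a quadratic residue when $p\equiv 1\,(4)$) supplies a justification of that identity which the paper simply asserts.
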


\begin{proof}The formulas follow from a straightforward insertion of the data of Table~\ref{table:p14} into~\eqref{equation:ChevalleyWeilCongruence}. Note that for the multiplicities of $\chi_1$ and $\chi_2$ we have used the formulas
\begin{gather*}
\sum_{\substack{k=1\\ \left(\frac{k}{p}\right)_L =1}}^{p-1} \frac{k}{p} = \frac{p-1}{4} = \sum_{\substack{k=1\\ \left(\frac{k}{p}\right)_L = -1}}^{p-1} \frac{k}{p},
\end{gather*}
which are valid for $p\equiv 1\, (4)$.
\end{proof}

\subsection[Case $p\equiv 3 \, (4)$]{Case $\boldsymbol{p\equiv 3 \, (4)}$}

In this case
\begin{gather*}
S \sim \varphi^{\frac{p+1}{4}}, \qquad R \sim \begin{cases} \twobytwo{u_p^{\frac{p-1}{3}}}{0}{0}{u_p^{-\frac{p-1}{3}}} &\text{if } p\equiv 1\, (3), \\
\varphi^{\frac{p+1}{3}} &\text{ if } p\equiv 2\, (3),
\end{cases}
\end{gather*}
and from the character table of \cite[Theorem~8.11]{GroupReps} we obtain the following data (note that $T\sim \smalltwobytwo{1}{0}{u_p}{1}$ in this case)
\begin{gather*}
\begin{array}{@{}c|c|c|c| c | c | c}
&\dim &S &R\;\text{ if } p\equiv 1\, (3) & R\;\text{ if } p\equiv 2\, (3) & T & \smalltwobytwo{1}{0}{1}{1} \\
\hline
\lambda &p&-1&1&-1 & 0 & 0\\
\{\mu_s\}_{1\leq s \leq \frac{p-3}{4}} & p+1 & 0 & \epsilon_3(s) & 0& 1 & 1 \\
\{\theta_t\}_{1\leq t \leq \frac{p-3}{4}} & p-1 & -2\cdot(-1)^t & 0 & -\epsilon_3(t)& -1 & -1\\
\gamma_1 & \frac{p-1}{2} & -(-1)^{\frac{p+1}{4}} & 0 & -1 & \frac{-1 - \sqrt{-p}}{2} & \frac{-1 + \sqrt{-p}}{2}\\
\gamma_2 &\frac{p-1}{2} & -(-1)^{\frac{p+1}{4}} & 0 & -1& \frac{-1 + \sqrt{-p}}{2} & \frac{-1 - \sqrt{-p}}{2}
\end{array}
\end{gather*}
where the irreducible characters $\lambda$, $\mu_s$, $\theta_t$, $\chi_1$, $\chi_2$ are labelled as in \cite[Theorem~8.11]{GroupReps}. Exactly as in Proposition~\ref{prop:TRamification} we can derive from this table the ramification data at $\infty$:
\begin{gather*}
N_k(\lambda; T) = 1, \qquad N_k(\mu_s; T) = 1, \qquad N_k(\theta_t; T) = 1, \\
 N_k(\gamma_1; T) = \begin{cases} 1 &\text{if } \left(\dfrac{k}{p}\right)_L =-1, \vspace{1mm}\\
 0 &\text{if } \left(\dfrac{k}{p}\right)_L = 1,
\end{cases} \qquad N_k(\gamma_2; T) = \begin{cases} 0 &\text{if } \left(\dfrac{k}{p}\right)_L =-1, \vspace{1mm}\\
 1 &\text{if } \left(\dfrac{k}{p}\right)_L = 1,
\end{cases}
\end{gather*}
where $k=1,\ldots,p-1$. Putting everything into formula~\eqref{equation:ChevalleyWeilCongruence} we get:

\begin{Theorem}
\label{Thm:CWFormulaForP34}
Let $p\geq 5$ be a prime, $p\equiv 3\,(4)$. Then the multiplicities of the irreducible representations of $\mathrm{PSL}_2(\mathbb{F}_p)$ inside the canonical representation $\rho_{\Gamma(p)}$ are given by
\begin{gather*}
d(\lambda) = \frac{p-3}{12} - \frac{1}{3}\left(\frac{p}{3}\right)_L ,\\
d(\mu_s) = \frac{p-11}{12} - \left(1+\left(\frac{p}{3}\right)_L\right) \frac{\epsilon_3(s)}{6}, \qquad 1\leq s \leq \frac{p-3}{4}, \\
d(\theta_t) = \frac{p-1}{12} + \frac{(-1)^t}{2} + \left(1-\left(\frac{p}{3}\right)_L\right) \frac{\epsilon_3(t)}{6}, \qquad 1\leq t \leq \frac{p-3}{4}, \\
d(\gamma_1) = \frac{p-1}{24} - \frac{h(p)}{2} + \frac{(-1)^{\frac{p+1}{4}}}{4} + \frac{1}{6}\left(1-\left(\frac{p}{3}\right)_L\right), \\
d(\gamma_2) = \frac{p-1}{24} + \frac{h(p)}{2} + \frac{(-1)^{\frac{p+1}{4}}}{4} + \frac{1}{6}\left(1-\left(\frac{p}{3}\right)_L\right),
\end{gather*}
where $h(p)$ is the class number of $\mathbb{Q}(\sqrt{-p})$.
\end{Theorem}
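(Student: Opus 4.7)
The plan is to mirror the proof of Theorem~\ref{Thm:CWFormulaForP14}: we substitute the character values from the $p\equiv 3\,(4)$ table together with the ramification data at~$\infty$ (both of which have already been tabulated immediately preceding the theorem) into formula~\eqref{equation:ChevalleyWeilCongruence} and simplify for each irreducible representation in turn.

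For $\rho\in\{\lambda,\mu_s,\theta_t\}$ the ramification data shows that $N_k(\rho;T)=1$ for every $k=1,\dots,p-1$, so
\[
\sum_{k=1}^{p-1}N_k(\rho;T)\left(1-\frac{k}{p}\right) = (p-1)-\frac{p-1}{2} = \frac{p-1}{2}.
\]
The two possible values of $\operatorname{Tr}(\rho(R))$, according to whether $p\equiv 1$ or $2\,(3)$, can be packaged by means of the Legendre symbol $\left(\frac{p}{3}\right)_L$ together with the function $\epsilon_3$; the stated expressions for $d(\lambda)$, $d(\mu_s)$ and $d(\theta_t)$ then drop out of~\eqref{equation:ChevalleyWeilCongruence} after a routine arithmetic simplification essentially identical to the one already carried out for $p\equiv 1\,(4)$.

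The only genuinely new ingredient appears in the treatment of $\gamma_1$ and $\gamma_2$. Here $N_k(\gamma_1;T)$ is the indicator function of the set of quadratic non-residues modulo~$p$, while $N_k(\gamma_2;T)$ is the indicator function of the quadratic residues, so the analysis reduces to the two partial sums
\[
A:=\!\!\!\!\sum_{\left(\frac{k}{p}\right)_L=1}\!\!\!\!\frac{k}{p},\qquad B:=\!\!\!\!\sum_{\left(\frac{k}{p}\right)_L=-1}\!\!\!\!\frac{k}{p}.
\]
In the $p\equiv 1\,(4)$ case one had $A=B=(p-1)/4$, but this symmetry fails when $p\equiv 3\,(4)$. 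The key input is Dirichlet's analytic class number formula for the imaginary quadratic field $\mathbb{Q}(\sqrt{-p})$, which (for $p\geq 5$ prime with $p\equiv 3\,(4)$, so that the field has no extra units) gives
\[
h(p) \;=\; -\frac{1}{p}\sum_{k=1}^{p-1}\left(\frac{k}{p}\right)_L k \;=\; B-A.
\]
Combined with the elementary identity $A+B=(p-1)/2$, this pins down $A=(p-1)/4-h(p)/2$ and $B=(p-1)/4+h(p)/2$. Substituting these together with the trace values $\operatorname{Tr}(\gamma_i(S))=-(-1)^{(p+1)/4}$ and $\operatorname{Tr}(\gamma_i(R))\in\{0,-1\}$ into~\eqref{equation:ChevalleyWeilCongruence} will then yield the stated formulas for $d(\gamma_1)$ and $d(\gamma_2)$.

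The main (indeed only) departure from the $p\equiv 1\,(4)$ calculation is this invocation of the class number formula, which is what forces $h(p)$ to appear in the multiplicities of $\gamma_1$ and $\gamma_2$; once the identity $B-A=h(p)$ is in hand, the remainder is a direct substitution of character values that closely parallels the proof of Theorem~\ref{Thm:CWFormulaForP14}.
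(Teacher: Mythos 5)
Your proposal is correct and follows essentially the same route as the paper: plug the tabulated character values and $N_k(\cdot;T)$ data into \eqref{equation:ChevalleyWeilCongruence}, with the only nontrivial point being the evaluation of the residue/non-residue partial sums of $k/p$ via the class number of $\mathbb{Q}(\sqrt{-p})$, yielding $A=\frac{p-1}{4}-\frac{h(p)}{2}$ and $B=\frac{p-1}{4}+\frac{h(p)}{2}$. The paper phrases this as $B-A=L\big(0,\big(\tfrac{\cdot}{p}\big)_L\big)$ followed by the functional equation and the analytic class number formula, whereas you quote the equivalent finite Dirichlet form $h(p)=-\tfrac{1}{p}\sum_{k=1}^{p-1}\big(\tfrac{k}{p}\big)_L k$ directly; this is the same argument in slightly different packaging.
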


\begin{proof}Only the formulas for $d(\gamma_1)$, $d(\gamma_2)$ require justification. Note that since $p\equiv 3\,(4)$
\begin{gather*}
\sum_{\substack{k=1\\ \left(\frac{k}{p}\right)_L = -1}}^{p-1} \frac{k}{p} - \sum_{\substack{k=1\\ \left(\frac{k}{p}\right)_L =1}}^{p-1} \frac{k}{p} = L\left (0,\left(\frac{\cdot}{p}\right)_L\right) \neq 0
\end{gather*}
and clearly $\sum\limits_{\substack{k=1\\ \left(\frac{k}{p}\right)_L = -1}}^{p-1} \frac{k}{p} + \sum\limits_{\substack{k=1\\ \left(\frac{k}{p}\right)_L = 1}}^{p-1} \frac{k}{p} = (p-1)/2$. Using the functional equation for the $L$-function and the analytic class number formula, we get
\begin{gather*}
\sum_{\substack{k=1\\ \left(\frac{k}{p}\right)_L =1}}^{p-1} \frac{k}{p} = \frac{p-1}{4} - \frac{h(p)}{2}, \qquad \sum_{\substack{k=1\\ \left(\frac{k}{p}\right)_L = -1}}^{p-1} \frac{k}{p} = \frac{p-1}{4} + \frac{h(p)}{2},
\end{gather*}
which gives the formulas for $d(\gamma_1)$, $d(\gamma_2)$.
\end{proof}

\begin{Remark}We thank Cameron Franc for showing us how to simplify $d(\gamma_1)$ and $d(\gamma_2)$ using Dirichlet's $L$-functions and the class number formula. Similar computations, but in a different context, have also appeared in \cite{CFrancKopp}.
\end{Remark}

\begin{Example}
The formulas of Theorems~\ref{Thm:CWFormulaForP14} and~\ref{Thm:CWFormulaForP34} show that the only prime~$p$ for which the canonical representation $\rho_{\Gamma(p)}$ is irreducible is the prime $p=7$. In this case the orbifold curve~$X(\Gamma(7))$ is an actual algebraic curve, and in fact it is a model for the {\em Klein quartic}, the unique Hurwitz surface of genus~3. Applying Theorem~\ref{Thm:CWFormulaForP34} we get that
$d(\gamma_2) = 1$, since $h(7)=1$, while all the other multiplicities are zero. Therefore $\rho_{\Gamma(p)} \simeq \gamma_2$. To write down explicit matrices for~$\rho_{\Gamma(p)}$, we know by \cite{TubaWenzl} that a three-dimensional irreducible representation of~$\mathrm{PSL}_2(\mathbb{Z})$ is entirely determined by the eigenvalues of the matrix~$\rho_{\Gamma(p)}(T)$. As in Theorem~\ref{Thm:CWFormulaForP34}, we know that these are $\zeta_7$, $\zeta_7^2$, $\zeta_7^4$, where $\zeta_7 := e^{2\pi i /7}$. According to \cite[Proposition~2.5]{TubaWenzl}, there is a basis where~$\rho_{\Gamma(p)}$ is given by the matrices
\begin{gather*}
\rho_{\Gamma(7)}(T) = \left(\begin{matrix}
\zeta_7 & \zeta^2_7 + \zeta^3_7 & \zeta_7^2 \\
0 & \zeta_7^2 & \zeta_7^2 \\
0 & 0 & \zeta_7^4
\end{matrix} \right), \qquad \rho_{\Gamma(7)}(S) = \left(\begin{matrix}
0 & 0 & 1 \\
0 & -1 & 0 \\
1 & 0 & 0
\end{matrix} \right).
\end{gather*}
\end{Example}

\section{The canonical representation of Fermat curves}\label{section:FermatCurves}

Let $\Gamma(2)\subseteq \mathrm{SL}_2(\mathbb{Z})$ be the principal congruence subgroup of level two. This is isomorphic to the free product on two generators, which can be chosen to be
\begin{gather*}
A = \twobytwo{1}{2}{0}{1}, \qquad B = \twobytwo{1}{0}{-2}{1}.
\end{gather*}
For any integer $N > 1$, let $\Phi(N)$ be the kernel of the composition
\begin{gather*}
\Gamma(2) \stackrel{\varphi^{\rm{ab}}}\longrightarrow \mathbb{Z}\times \mathbb{Z} \longrightarrow \mathbb{Z}/N\mathbb{Z} \times \mathbb{Z}/N\mathbb{Z}, \\
\smalltwobytwo{1}{2}{0}{1} \longmapsto (1,0), \qquad \smalltwobytwo{1}{0}{-2}{1} \longmapsto (0,1),
\end{gather*}
where the first map $\varphi^{\rm{ab}}$ is projection onto the abelianization and the second map is reduction modulo~$N$. The composition is clearly surjective, therefore $\Phi(N)$ is a normal subgroup of index~$N^2$ in $\Gamma(2)$. We also denote by $\Phi(N)$ the corresponding subgroup inside $\mathrm{PSL}_2(\mathbb{Z})$.

\begin{Lemma}\label{lemma:Phi(N)IsNormal}
For all $N>1$, $\Phi(N)$ is normal in $\mathrm{PSL}_2(\mathbb{Z})$.
\end{Lemma}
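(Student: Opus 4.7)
The plan is to deduce normality from the functoriality of abelianization, avoiding any explicit computation with coset representatives. First I would observe that $\Gamma(2)$ is itself normal in $\mathrm{PSL}_2(\mathbb{Z})$: it is the image of the kernel of reduction modulo $2$ on $\mathrm{SL}_2(\mathbb{Z})$, which is a kernel of a group homomorphism and therefore maps to a normal subgroup of $\mathrm{PSL}_2(\mathbb{Z})$. Consequently, for each $g \in \mathrm{PSL}_2(\mathbb{Z})$, conjugation $c_g\colon h \mapsto ghg^{-1}$ restricts to an honest group automorphism of $\Gamma(2)$.

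Next I would use the factorization built into the very definition of $\Phi(N)$:
\[
\Gamma(2) \xrightarrow{\varphi^{\rm ab}} \Gamma(2)^{\rm ab} \simeq \mathbb{Z}\times\mathbb{Z} \longrightarrow (\mathbb{Z}/N\mathbb{Z})^2,
\]
where the first map is abelianization and the second is reduction modulo $N$. Since abelianization is functorial, $c_g$ descends to a $\mathbb{Z}$-module automorphism $\bar{c}_g \in \mathrm{GL}_2(\mathbb{Z})$ of $\Gamma(2)^{\rm ab}$. Any such automorphism preserves the submodule $N\cdot \Gamma(2)^{\rm ab}$, so $\bar{c}_g$ preserves the kernel of the reduction map; pulling back along $\varphi^{\rm ab}$ shows that $c_g$ preserves the set $\Phi(N) = (\varphi^{\rm ab})^{-1}(N\cdot\Gamma(2)^{\rm ab})$. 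This gives $g\Phi(N)g^{-1} \subseteq \Phi(N)$ for every $g \in \mathrm{PSL}_2(\mathbb{Z})$, which is exactly the desired normality.

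There is essentially no obstacle here: the only point that requires articulation is the recognition that the composition defining $\Phi(N)$ factors canonically through the abelianization, which is immediate from its construction. An alternative, more computational, approach would be to fix lifts of coset representatives of $\Gamma(2)$ in $\mathrm{PSL}_2(\mathbb{Z})$ (for instance lifts of the six elements of $\mathrm{PSL}_2(\mathbb{Z})/\Gamma(2)\simeq S_3$, or just the generators $S$ and $T$) and verify directly that conjugation sends each of $A$ and $B$ into a product of $A^{\pm 1}$, $B^{\pm 1}$ and commutators, so that its class in $(\mathbb{Z}/N\mathbb{Z})^2$ depends only on the original exponents modulo $N$. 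The functorial argument above simply packages this verification in a coordinate-free way.
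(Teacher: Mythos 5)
Your argument is correct and is essentially the paper's own proof: both deduce normality by descending to the abelianization $\Gamma(2)^{\mathrm{ab}}\simeq\mathbb{Z}\times\mathbb{Z}$ (via the universal property/functoriality) and observing that the subgroup cut out there by reduction modulo $N$ is canonical, so that conjugation by any $g\in\mathrm{PSL}_2(\mathbb{Z})$ fixes $\Phi(N)$. The only cosmetic difference is that the paper phrases the key step as ``any surjection $\mathbb{Z}\times\mathbb{Z}\to(\mathbb{Z}/N\mathbb{Z})^2$ has kernel $N\mathbb{Z}\times N\mathbb{Z}$'' whereas you phrase it as ``any automorphism of $\mathbb{Z}\times\mathbb{Z}$ preserves $N\mathbb{Z}\times N\mathbb{Z}$,'' and you make explicit the normality of $\Gamma(2)$ in $\mathrm{PSL}_2(\mathbb{Z})$, which the paper uses implicitly.
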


\begin{proof}By definition, $\Phi(N) = \varphi^{\rm{ab},-1}(N\mathbb{Z}\times N\mathbb{Z})$. For any $\gamma\in \mathrm{PSL}_2(\mathbb{Z})$, the conjugate subgroup $g^{-1} \Phi(N)g$ is the kernel of a surjective homomorphism $\Gamma(2)\rightarrow \mathbb{Z}/N\mathbb{Z}\times \mathbb{Z}/N\mathbb{Z}$. This map must factor through a map $\Gamma(2)^{\rm{ab}} = \mathbb{Z}\times \mathbb{Z}\rightarrow \mathbb{Z}/N\mathbb{Z}\times \mathbb{Z}/N\mathbb{Z}$, by the universal property of abelianizations. Any such map must have kernel equal to $N\mathbb{Z} \times N\mathbb{Z}$, therefore $g^{-1} \Phi(N)g = \varphi^{\rm{ab},-1}(N\mathbb{Z}\times N\mathbb{Z}) = \Phi(N)$.
\end{proof}

The normal subgroup $\Phi(N)$ can be described explicitly inside $\mathrm{PSL}_2(\mathbb{Z})$ as the normal closure
\begin{gather*}
\Phi(N) = \big\langle U^3, V^{2N} \big\rangle^{\mathrm{PSL}_2(\mathbb{Z})}, \qquad V := SR^{-1}, \qquad U := TV \in \mathrm{PSL}_2(\mathbb{Z}).
\end{gather*}

The quotient $G:= \mathrm{PSL}_2(\mathbb{Z})/\Phi(N)$ is the semi-direct product $(\mathbb{Z}/N\mathbb{Z})^2 \rtimes S_3$. The normal subgroup $(\mathbb{Z}/N\mathbb{Z})^2$ is generated by the images of $A$ and $B$ while the $S_3$-subgroup is generated by the images of $S$ and $U$. The action of $S_3$ on $(\mathbb{Z}/N\mathbb{Z})^2$ is given explicitly by
\begin{gather*}
SAS = B, \qquad UAU^2 = B, \qquad UBU^2 = A^{-1}B^{-1} \mod \Phi(N).
\end{gather*}

By Lemma \ref{lemma:Phi(N)IsNormal}, the curve $X(\Phi(N))$ is a Galois cover
\begin{gather*}
f(\Phi(N))\colon \ X(\Phi(N)) \longrightarrow X(1)
\end{gather*}
with Galois group isomorphic to $G$. Since $A = T^2$ and $A^N\in \Phi(N)$, the image of $T$ in $G$ has order $2N$, so the ramification level is $2N$. For all $N>1$, there is an embedding of~$X(\Phi(N))$ into~$\mathbb{P}^2$ cut out by the equation $X^N + Y^N = Z^N$ in homogeneous coordinates~\cite{Lang}. In other words, $X(\Phi(N))$ is a~uniformization of the {\em Fermat curve}~$F_N$ of exponent~$N$.

By construction, the group of automorphisms of the curve $X(\Phi(N))$ contains $G = (\mathbb{Z}/N\mathbb{Z})^2 \rtimes S_3$ and it is known that these are indeed all the automorphisms of the curve~\cite{Tzermias}. We now apply Theorem~\ref{theorem:Chevalley--WeilModForms} to compute the decomposition of the canonical representation
\begin{gather*}
\rho_{\Phi(N)}\colon \ G \longrightarrow \mathrm{GL}(S_2(\Phi(N)))
\end{gather*}
into irreducible representations. The genus of $X(\Phi(N))$ can easily be worked out to be $\frac{1}{2}(N-2)(N-1)$, so we may assume that $N>2$. Note that $\Phi(N)$ is a non-congruence subgroup for almost all $N$ \cite{Phillips-Sarnak}, therefore the decomposition of $\rho_{\Phi(N)}$ cannot be deduced from the calculations of Section~\ref{section:principalCongruenceSubgroups}, even for $N=p$ a prime.

All the irreducible representations of $G$ can be computed using the `little subgroups method' for semi-direct products by an abelian group \cite[Section~8.2]{Serre-LRFG}, \cite[Table~2]{Barraza-Rojas}, which can be applied as follows. Let $S_3$ act on $\operatorname{Hom}((\mathbb{Z}/N\mathbb{Z})^2, \mathbb{C}^{\times})$ by $g\chi(x) = \chi(g^{-1}xg)$ and let $\{\chi_i\}$ be representatives for the cosets under this action. Let $H_i\subseteq S_3$ be the stabilizer of $\chi_i$, so that $\chi_i$ can be extended to a character of $G_i = (\mathbb{Z}/N\mathbb{Z})^2\cdot H_i$. Let $\bar{\rho}$ be any representation of $H_i$, viewed as a~representation of $G_i$ under the quotient map $G_i \rightarrow H_i$ and let $\theta_{i,\bar{\rho}} = \mathrm{Ind}_{G_i}^G \bar{\rho}\otimes \chi_i$. It can be shown that $\theta_{i,\bar{\rho}}$ is irreducible and that all the irreducible representations of~$G$ are obtained in this way \cite[Proposition~25]{Serre-LRFG}. These calculations in our case break down into two cases, according to whether $3 \,|\, N$.

\subsection[Case $3\nmid N$]{Case $\boldsymbol{3\nmid N}$}

In this case the orbits of the action of $S_3$ on $\operatorname{Hom}((\mathbb{Z}/N\mathbb{Z})^2, \mathbb{C}^{\times})$ can only have order 1, 3 or 6 with stabilizers $H_i$ isomorphic to $S_3$, $C_2$ or trivial, respectively. Write $\chi_{\alpha, \beta}$ for the character in $\operatorname{Hom}((\mathbb{Z}/N\mathbb{Z})^2, \mathbb{C}^{\times})$ uniquely determined by
\begin{gather*}
\chi_{\alpha,\beta}(A) = \zeta_{N}^{\alpha}, \qquad \chi_{\alpha,\beta}(B) = \zeta_{N}^{\beta}, \qquad \zeta_N = e^{2 \pi i /N}.
\end{gather*}
Then the $S_3$-orbit of size one is $\{\chi_{0,0} = 1\}$ and the $S_3$-orbits of size three are of the form $\{ \chi_{\alpha, \alpha}, \chi_{\alpha, -2\alpha}, \chi_{-2\alpha, \alpha}\}$ for $\alpha = 1, \ldots, N-1$. There are a total of $3(N-1) + 1$ characters in these orbits. The remaining $N^2 - 3N + 2$ elements are partitioned into orbits of size 6, with trivial stabilizers. We let
\begin{gather*}
\{\chi_{\alpha_i, \beta_i} \},\qquad i = 1, \ldots, \frac{N^2 - 3N + 2}{6} = \frac{1}{6}(N-2)(N-1)
\end{gather*}
be a set of representatives of these orbits. It is easy to see that $\alpha_i$, $\beta_i$ can be chosen so that $\alpha_i, \beta_i \neq 0$. Assuming so in what follows slightly simplifies the formulas.

Now the orbit $\{\chi_{0,0} = 1\}$ produces via the small subgroup method 3 representations, corresponding to the 3 irreducible representations of $S_3$:
\begin{itemize}\itemsep=0pt
\item[(i)] the trivial representation $\rho_1$,
\item[(ii)] 1 one-dimensional irreducible representation $\rho_2$,
\item[(iii)] 1 two-dimensional irreducible representation $\rho_3$.
\end{itemize}

The orbits of cyclic stabilizer $C_2$, with representatives $\chi_{\alpha, \alpha}$ produce
\begin{itemize}\itemsep=0pt
\item[(iv)] $(N-1)$irreducible 3-dimensional representations $\rho^+_{\alpha}$, $\alpha \in \{1,\ldots, N-1\}$,
\item[(v)] $(N-1)$ irreducible 3-dimensional representations $\rho^{-}_{\alpha}$, $\alpha \in \{1,\ldots, N-1\}$.
\end{itemize}
Each class comes from inducing $\bar{\rho}^{\pm} \otimes \chi_{\alpha}$ to $G$, where $\rho^{+}$ is the trivial representation of~$C_2$ and~$\rho^{-}$ is the non-trivial. Finally, the orbits of representative $\chi_{\alpha_i, \beta_i}$ give
\begin{itemize}\itemsep=0pt
\item[(vi)] $\frac{1}{6}(N-2)(N-1)$ irreducible 6-dimensional representations $\rho_{\alpha_i,\beta_i}$,
\end{itemize}
each representation being given by inducing $\chi_{\alpha_i, \beta_i}$ to all of $G$. Explicit matrices for the genera\-tors~$S$, $U$, $A$ can easily be written down as in~\cite{Barraza-Rojas}, from which we also borrowed the notation (a slight correction is needed in Table~2 of~\cite{Barraza-Rojas} for the matrices $\rho_{\alpha}^{\pm}(A)$ and $\rho_{\alpha,\beta}(A)$). Using the relation
\begin{gather*}
R = A^{-1}U \mod \Phi(N)
\end{gather*}
the traces of $\rho(S)$, $\rho(R)$, $\rho\big(R^{-1}\big)$ and the eigenvalues of the $\rho(T)$-matrix can then be computed from the explicit formulas and they are given in table 
\begin{gather*}
\begin{array}{@{}c|c|c|c| c | c }
&\dim &S &R & R^{-1} & T-\text{eigenvalues} \\
\hline
\rho_2 &1&-1&1&1 & -1 \\
\rho_3 & 2 & 0 & -1 & -1& 1,-1 \\
\rho^+_{\alpha} & 3 & 1 & 0 & 0 & \zeta_N^{-\alpha}, \zeta^{\alpha}_{2N},-\zeta^{\alpha}_{2N} \\
\rho^-_{\alpha} & 3 & -1 & 0 & 0 & -\zeta_N^{-\alpha}, -\zeta^{\alpha}_{2N},\zeta^{\alpha}_{2N} \\
\rho_{\alpha_i,\beta_i} &6 & 0 & 0 & 0 & \pm \zeta_{2N}^{\alpha_i}, \pm\zeta_{2N}^{\beta_i}, \pm\zeta_{2N}^{-(\alpha_i + \beta_i)}
\end{array}
\end{gather*}
where $\zeta_N = e^{2\pi i/N}$ and $\zeta_{2N} = e^{2\pi i/2N}$. Applying Theorem~\ref{theorem:Chevalley--WeilModForms} we obtain the following.

\begin{Theorem}\label{thm:CWFermatCurveNDoesNotDivide3}Let $N>2$ be an integer such that $3\nmid N$. Then the multiplicities of the irreducible representations of $G = (\mathbb{Z}/N\mathbb{Z})^2 \rtimes S_3$ inside the canonical representation $\rho_{\Phi(N)}$ are given by
\begin{gather*}
d(\rho_1) = d(\rho_2) = d(\rho_3) = 0, \qquad
 d(\rho_{\alpha}^+) = 0, \qquad \alpha=1, \ldots, N-1, \\
d(\rho_{\alpha}^-) = \begin{cases} 1 & \text{if } \alpha = 1, \ldots, \left\lfloor \dfrac{N-1}{2} \right\rfloor, \vspace{1mm}\\
 0 & \text{if } \alpha = \left\lfloor \dfrac{N-1}{2} \right\rfloor + 1, \ldots, N - 1,\end{cases} \\
d(\rho_{\alpha_i,\beta_i}) = \begin{cases} 1 & \text{if } \alpha_i + \beta_i < N, \\
 0 & \text{if } \alpha_i + \beta_i \geq N, \end{cases} \qquad \alpha_i, \beta_i \neq 0 .
\end{gather*}
\end{Theorem}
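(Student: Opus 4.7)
The plan is to apply Theorem~\ref{theorem:Chevalley--WeilModForms} directly to each of the six families of irreducible representations tabulated above. The ramification level is $e = 2N$ (the order of the image of $T$ in $G$), so the only ingredients needed---the dimension, the traces $\operatorname{Tr}(\rho(S))$, $\operatorname{Tr}(\rho(R))$, $\operatorname{Tr}(\rho(R^{-1}))$, and the eigenvalues of $\rho(T)$---are read off directly from the table.

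First I would record two preparatory simplifications. Every trace $\operatorname{Tr}(\rho(R))$ in the table is a real integer, so $\operatorname{Tr}(\rho(R^{-1})) = \operatorname{Tr}(\rho(R))$, and using $\zeta_3 + \zeta_3^2 = -1$ together with $(1-\zeta_3)(1-\zeta_3^2) = 3$, the combined $R$-contribution in Theorem~\ref{theorem:Chevalley--WeilModForms} collapses to $-\operatorname{Tr}(\rho(R))/3$. For the $T$-ramification sum, each eigenvalue $\zeta_{2N}^m$ of $\rho(T)$ is represented by the unique $k \in \{0, 1, \ldots, 2N-1\}$ with $m \equiv k \pmod{2N}$, and the trivial eigenvalue $k = 0$ contributes nothing to $\sum_{k=1}^{2N-1} N_k(\rho; T)(1 - k/(2N))$.

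With these in hand, the proof becomes case-by-case arithmetic. For $\rho_1$ (excluded by Theorem~\ref{theorem:Chevalley--WeilModForms}) and for $\rho_2, \rho_3$, direct substitution gives $d = 0$. For $\rho_\alpha^+$ the three $T$-exponents reduce to the $k$-values $2N - 2\alpha$, $\alpha$, $\alpha + N$ summing to $3N$, yielding $d(\rho_\alpha^+) = -\frac{5}{4} + \frac{3}{2} - \frac{1}{4} = 0$. For $\rho_\alpha^-$ the eigenvalue $-\zeta_N^{-\alpha} = \zeta_{2N}^{N - 2\alpha}$ reduces to $k = N - 2\alpha$ when $\alpha < N/2$ and to $k = 3N - 2\alpha$ when $\alpha > N/2$; the resulting case split (together with the boundary case $\alpha = N/2$, possible only when $N$ is even, where a $k=0$ eigenvalue appears) produces $d(\rho_\alpha^-) = 1$ precisely for $\alpha \leq \lfloor (N-1)/2 \rfloor$. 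For $\rho_{\alpha_i, \beta_i}$ the six eigenvalues $\pm \zeta_{2N}^{\alpha_i}, \pm \zeta_{2N}^{\beta_i}, \pm \zeta_{2N}^{-(\alpha_i + \beta_i)}$ reduce to $k$-values whose sum equals $5N$ when $\alpha_i + \beta_i < N$ and $7N$ when $\alpha_i + \beta_i \geq N$, producing the stated dichotomy $d = 1$ versus $d = 0$.

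The main obstacle, although entirely routine, will be the bookkeeping of exponents modulo $2N$: one must identify the correct representative $k$ for each eigenvalue in each family, and verify that the threshold conditions (namely $\alpha$ compared to $N/2$, and $\alpha_i + \beta_i$ compared to $N$) are independent of the choice of orbit representative with $\alpha_i, \beta_i \neq 0$. The latter independence follows because the triple $(\alpha_i, \beta_i, -\alpha_i - \beta_i)$ sums to $0 \pmod{N}$, so for a size-$6$ orbit with all three coordinates nonzero its integer sum in $\{1,\ldots,N-1\}^3$ is either $N$ or $2N$, forcing every pair-sum within the orbit to lie on the same side of $N$. A useful sanity check at the end is that $\sum_\rho d(\rho) \dim \rho$ reproduces the genus $\frac{1}{2}(N-2)(N-1)$ of the Fermat curve $F_N$.
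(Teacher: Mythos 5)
Your proposal is correct and follows exactly the paper's route: the paper's proof of Theorem~\ref{thm:CWFermatCurveNDoesNotDivide3} is nothing more than substituting the tabulated dimensions, traces and $T$-eigenvalues into Theorem~\ref{theorem:Chevalley--WeilModForms} with $e=2N$, which is precisely your case-by-case computation (including the collapse of the $R$-terms to $-\operatorname{Tr}(\rho(R))/3$). One minor slip that does not affect the conclusion: when $\alpha_i+\beta_i=N$ (which does occur, e.g., for orbits whose third coordinate is zero, such as the representative $\chi_{1,N-1}$), the eigenvalue $-\zeta_{2N}^{-(\alpha_i+\beta_i)}$ equals $1$, so the relevant $k$-sum is $5N$ taken over only five nonzero values of $k$ rather than $7N$ over six, and $d=0$ then follows by the same $k=0$ mechanism you already invoked for $\alpha=N/2$ in the $\rho^-_{\alpha}$ family.
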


\begin{Example}Let $N=7$. There are 5 $S_3$-orbits of size 6 in $\operatorname{Hom}((\mathbb{Z}/7\mathbb{Z})^2, \mathbb{C}^{\times})$, and the representatives for these orbits can be chosen to be
$\{\chi_{6,1}, \chi_{5,2}, \chi_{4,3}, \chi_{2,1}, \chi_{6,3}\}$. By Theorem~\ref{thm:CWFermatCurveNDoesNotDivide3} we deduce that
\begin{gather*}
\rho_{\Phi(7)} \sim \rho^-_1 \oplus \rho^-_2 \oplus \rho^-_3 \oplus \rho_{2,1}.
\end{gather*}
An explicit model for $\rho_{\Phi(7)}$ can then be written down using the following formulas
\begin{gather*}
\rho_{\alpha}^-(T) = \left(\begin{matrix}
0 & 0 & \zeta_7^{-\alpha} \\
0 & \zeta_7^{-\alpha} & 0 \\
-\zeta_7^{2\alpha} & 0 & 0
\end{matrix}\right) , \qquad \rho_{\alpha}^-(S)= \left(\begin{matrix}
-1 & 0 & 0 \\
0 & 0 & -1 \\
0 & -1 & 0
\end{matrix}\right) , \qquad \alpha=1,2,3,\\
\rho_{2,1}(T) =\left(\begin{matrix}
0 & 0 & 0 & 0 & \zeta_7^{-1} & 0 \\
0 & 0 & 0 & 0 & 0 & \zeta_7^{-2} \\
0 & 0 & 0 & \zeta_7^{3} & 0 & 0 \\
0 & 0 & \zeta_7^{-1} & 0 & 0 & 0 \\
\zeta_7^{3} & 0 & 0 & 0 & 0 & 0 \\
0 & \zeta_7^{-2} & 0 & 0 & 0 & 0
\end{matrix} \right) ,\qquad \rho_{2,1}(S)= \left(\begin{matrix}
0 & 0 & 0 & 1 & 0 & 0 \\
0 & 0 & 0 & 0 & 1 & 0 \\
0 & 0 & 0 & 0 & 0 & 1 \\
1 & 0 & 0 & 0 & 0 & 0 \\
0 & 1 & 0 & 0 & 0 & 0 \\
0 & 0 & 1 & 0 & 0 & 0
\end{matrix} \right).
\end{gather*}
\end{Example}

\subsection[Case $3\,|\, N$]{Case $\boldsymbol{3\,|\, N}$} In this case there are 3 characters with full stabilizer $S_3$, given by $\{\chi_{0,0}\}$, $\{\chi_{N/3,N/3}\}$ and $\{\chi_{2N/3,2N/3}\}$ (and $N-3$ remaining orbits with $C_2$-stabilizers). The two additional characters each produce
\begin{itemize}\itemsep=0pt
\item[(vii)] 2 one-dimensional (non-trivial) representations $\rho^1_{N/3}$ and $\rho^1_{2N/3}$,
\item[(viii)] 2 one-dimensional (non-trivial) representations $\rho^2_{N/3}$ and $\rho^2_{2N/3}$,
\item[(ix)] 2 two-dimensional representations $\rho^3_{N/3}$ and $\rho^3_{2N/3}$,
\end{itemize}
the first corresponding to the trivial representation of $S_3$, the second to the sign representation, and the third to the 2-dimensional representation of $S_3$. For each one of these new representations we can compute the following data:
\begin{gather*}
\begin{array}{@{}c|c|c|c| c | c }
&\dim &S &R & R^{-1} & T\text{-eigenvalues} \\
\hline
\rho^1_{N/3} &1&1&\zeta_3^{-1} & \zeta_3 & \zeta_3^{-1} \\
\rho^1_{2N/3} &1&1&\zeta_3 & \zeta_3^{-1} & \zeta_3 \\
\rho^2_{N/3} &1&-1&\zeta_3^{-1} & \zeta_3 & -\zeta_3^{-1} \\
\rho^2_{2N/3} & 1 & -1 & \zeta_3 & \zeta_3^{-1} & -\zeta_3 \\
\rho^3_{N/3} & 2 & 0 & -\zeta_3^{-1} & -\zeta_3 & \zeta_6, -\zeta_6 \\
\rho^3_{2N/3} & 2 & 0 & -\zeta_3 & -\zeta_3^{-1} & -\zeta_3, \zeta_3 \\
\end{array}
\end{gather*}

Applying Theorem~\ref{theorem:Chevalley--WeilModForms} we get

\begin{Theorem}\label{thm:CWFermatCurve3DividesN} Let $N>2$ be an integer such that $3\,|\, N$. Then the multiplicities of the irreducible representations of $G = (\mathbb{Z}/N\mathbb{Z})^2 \rtimes S_3$ inside the canonical representation $\rho_{\Phi(N)}$ are given by
\begin{gather*}
d(\rho_1) = d(\rho_2) = d(\rho_3) = 0, \\
d(\rho_{\alpha}^+) = 0, \qquad \alpha = 1, \ldots, N-1, \qquad \alpha\neq N/3, 2N/3, \\
d(\rho_{\alpha}^-) = \begin{cases} 1 & \text{if } \alpha = 1, \ldots, \left\lfloor \dfrac{N-1}{2} \right\rfloor, \vspace{1mm}\\
 0 & \text{if } \alpha = \left\lfloor \dfrac{N-1}{2} \right\rfloor + 1, \ldots, N - 1, \end{cases} \qquad \alpha\neq N/3, 2N/3, \\
d(\rho_{\alpha_i,\beta_i}) = \begin{cases} 1 & \text{if } \alpha_i + \beta_i < N, \\
 0 & \text{if } \alpha_i + \beta_i \geq N, \end{cases} \qquad \alpha_i, \beta_i \neq 0 , \\
 d\big(\rho^1_{N/3}\big) = d\big(\rho^1_{2N/3}\big) = d\big(\rho^2_{2N/3}\big) = 0, \qquad
 d\big(\rho^2_{N/3}\big) = 1, \qquad
 d\big(\rho^3_{N/3}\big) = d\big(\rho^3_{2N/3}\big) = 0.
\end{gather*}
\end{Theorem}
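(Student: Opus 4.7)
The plan is to apply Theorem~\ref{theorem:Chevalley--WeilModForms} to each irreducible representation of $G = (\mathbb{Z}/N\mathbb{Z})^2\rtimes S_3$, noting that the ramification level at $\infty$ is $e=2N$ (the order of the image of $T$ in $G$) exactly as in the previous subsection. The irreducible representations split into two groups: those inherited from the $3\nmid N$ analysis (namely $\rho_1$, $\rho_2$, $\rho_3$, the $\rho_\alpha^\pm$ for $\alpha\notin\{N/3,2N/3\}$, and the $\rho_{\alpha_i,\beta_i}$), and the six new representations $\rho^j_{N/3},\rho^j_{2N/3}$ ($j=1,2,3$) arising from the two extra $S_3$-fixed characters $\chi_{N/3,N/3}$ and $\chi_{2N/3,2N/3}$.

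For the representations in the first group, the character values on $S$, $R$, $R^{-1}$ and the $T$-eigenvalues are \emph{identical} to those listed in the $3\nmid N$ table: the small-subgroup construction produces the same induced representation formulas (see \cite{Barraza-Rojas}), and the $\rho_{\alpha_i,\beta_i}$ still range over orbits of size~$6$ with $\alpha_i+\beta_i\not\equiv 0 \pmod N$. Consequently the computation of Theorem~\ref{thm:CWFermatCurveNDoesNotDivide3} carries over verbatim, producing the stated multiplicities for $\rho_1,\rho_2,\rho_3$, for $\rho_\alpha^\pm$ with $\alpha\neq N/3,2N/3$, and for all $\rho_{\alpha_i,\beta_i}$.

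For the six new representations, I would simply substitute the values from the character/eigenvalue table into the formula of Theorem~\ref{theorem:Chevalley--WeilModForms}. In each case the ramification sum $\sum_{k=1}^{2N-1} N_k(\rho;T)(1-k/(2N))$ reduces to a sum of at most two terms, since every listed $T$-eigenvalue $e^{2\pi i k_0/(2N)}$ with $k_0\in\{N/3,2N/3,4N/3,5N/3\}$ contributes a single summand $1-k_0/(2N)\in\{5/6,2/3,1/3,1/6\}$. The only non-obvious simplification is the one for the contribution of $\operatorname{Tr}\rho(R)$ and $\operatorname{Tr}\rho(R^{-1})$; here one uses the elementary identities
\begin{gather*}
\frac{1}{1-\zeta_3}+\frac{1}{1-\zeta_3^2}=1, \qquad \frac{\zeta_3}{1-\zeta_3^2}+\frac{\zeta_3^2}{1-\zeta_3}=0,
\end{gather*}
which follow from $(1-\zeta_3)(1-\zeta_3^2)=3$ and $1+\zeta_3+\zeta_3^2=0$. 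A direct substitution then produces $0$ for $\rho^1_{N/3}, \rho^1_{2N/3}, \rho^2_{2N/3}, \rho^3_{N/3}, \rho^3_{2N/3}$ and $1$ for $\rho^2_{N/3}$.

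The only real bookkeeping obstacle is making sure each $T$-eigenvalue (originally written as a power of $\zeta_3$ or $\zeta_6$) is correctly identified with the right $k\in\{1,\ldots,2N-1\}$ inside $\zeta_{2N}^k$, since the Chevalley--Weil ramification sum is indexed by these integers rather than by the eigenvalues themselves. Once this identification is done, the remainder is a short verification of six rational arithmetic identities.
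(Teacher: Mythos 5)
Your proposal is correct and follows essentially the same route as the paper, which likewise obtains Theorem~\ref{thm:CWFermatCurve3DividesN} by feeding the character/eigenvalue table for the six new representations (and the unchanged data for the representations surviving from the $3\nmid N$ case) into Theorem~\ref{theorem:Chevalley--WeilModForms}, exactly the rational arithmetic you outline. One small inaccuracy worth fixing: the size-$6$ orbits are those whose triple $\{\alpha,\beta,-\alpha-\beta\}$ consists of distinct residues, not those with $\alpha_i+\beta_i\not\equiv 0\pmod N$ (e.g., $\chi_{5,1}$ for $N=6$ has $\alpha+\beta\equiv 0$ yet lies in a size-$6$ orbit, as in the paper's example); this does not affect your multiplicity computations, since the stated formula already covers $\alpha_i+\beta_i\geq N$.
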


\begin{Example} Let $N=6$. There are 4 $S_3$-orbits of size 6 in $\operatorname{Hom}\big((\mathbb{Z}/6\mathbb{Z})^2, \mathbb{C}^{\times}\big)$, and representatives for these orbits can be chosen to be $\{ \chi_{5,1}, \chi_{4,2}, \chi_{5,4}, \chi_{3,2}\}$. By Theorem~\ref{thm:CWFermatCurve3DividesN} we deduce that
\begin{gather*}
\rho_{\Phi(6)} \sim \rho_2^2 \oplus \rho_1^- \oplus \rho_{3,2}
\end{gather*}
and an explicit model for $\rho_{\Phi(6)}$ can be written down using
\begin{gather*}
\rho_2^2(T) = e^{2\pi i/6} , \qquad \rho_2^2(S) = -1, \\
\rho_{\alpha}^-(T) = \left(\begin{matrix}
0 & 0 & \zeta_6^{-1} \\
0 & \zeta_6^{-1} & 0 \\
-\zeta_3 & 0 & 0
\end{matrix}\right) , \qquad \rho_{\alpha}^-(S)= \left(\begin{matrix}
-1 & 0 & 0 \\
0 & 0 & -1 \\
0 & -1 & 0
\end{matrix}\right), \\
\rho_{3,2}(T) =\left(\begin{matrix}
0 & 0 & 0 & 0 & \zeta_6^{-2} & 0 \\
0 & 0 & 0 & 0 & 0 & \zeta_6^{-3} \\
0 & 0 & 0 & \zeta_6^{5} & 0 & 0 \\
0 & 0 & \zeta_6^{-2} & 0 & 0 & 0 \\
\zeta_6^{5} & 0 & 0 & 0 & 0 & 0 \\
0 & \zeta_6^{-3} & 0 & 0 & 0 & 0
\end{matrix} \right) ,\qquad \rho_{3,2}(S)= \left(\begin{matrix}
0 & 0 & 0 & 1 & 0 & 0 \\
0 & 0 & 0 & 0 & 1 & 0 \\
0 & 0 & 0 & 0 & 0 & 1 \\
1 & 0 & 0 & 0 & 0 & 0 \\
0 & 1 & 0 & 0 & 0 & 0 \\
0 & 0 & 1 & 0 & 0 & 0
\end{matrix} \right).
\end{gather*}
\end{Example}

\pdfbookmark[1]{References}{ref}
\LastPageEnding

\end{document}